\newtheorem{theorem}{Theorem}[section]
\newtheorem{lemma}[theorem]{Lemma}
\newtheorem{cor}[theorem]{Corollary}
\newtheorem{prop}[theorem]{Proposition}
\newtheorem{prob}[theorem]{Problem}
\newtheorem{claim}[theorem]{Claim}
\theoremstyle{definition}
\newtheorem{definition}[theorem]{Definition}
\newtheorem{eg}[theorem]{Example}
\theoremstyle{remark}
\newtheorem{remark}[theorem]{Remark}
\numberwithin{equation}{section}
\newcommand{\bA}{\mathbb{A}}
\newcommand{\bC}{\mathbb{C}}
\newcommand{\bF}{\mathbb{F}}
\newcommand{\kc}{\overline{k}}
\newcommand{\fl}{\mathfrak{l}}
\newcommand{\sM}{\mathscr{M}}
\newcommand{\sO}{\mathscr{O}}
\newcommand{\bP}{\mathbb{P}}
\newcommand{\bQ}{\mathbb{Q}}
\newcommand{\bR}{\mathbb{R}}
\newcommand{\bZ}{\mathbb{Z}}
\newcommand{\Bs}{\mathrm{Bs}}
\newcommand{\Gal}{\mathrm{Gal}(\overline{k}/k)}
\newcommand{\NE}{\overline{\mathrm{NE}}}
\newcommand{\Pic}{\mathrm{Pic}}
\newcommand{\Spec}{\mathrm{Spec}}
\begin{document}

\title{Cylinders in weak del Pezzo fibrations}


\author{}
\address{}
\curraddr{}
\email{}
\thanks{}

\author{Masatomo Sawahara}
\address{Graduate School of Science and Engineering, Saitama University, Shimo-Okubo 255, Sakura-ku Saitama-shi,  Saitama 338-8570, JAPAN}
\curraddr{}
\email{sawahara.masatomo@gmail.com}
\thanks{This article has been accepted for publication in Transform.\ Groups (but this version is not the published version and does not reflect post-acceptance improvements, or any corrections). }


\subjclass[2010]{14E30, 14J26, 14J45, 14R10, 14R25.}

\keywords{weak del Pezzo surface, cylinder, generic fiber}

\date{}

\dedicatory{}

\begin{abstract}
In this article, we shall look into the existence of vertical cylinders contained in a weak del Pezzo fibration as a generalization of the former work due to Dubouloz and Kishimoto in which they observed vertical cylinders found in del Pezzo fibrations. 
The essence lying in the existence of a cylinder in the generic fiber, 
we devote mainly ourselves into a geometry of minimal weak del Pezzo surfaces defined over a field of characteristic zero from the point of view of cylinders. 
As a result, we give the classification of minimal weak del Pezzo surfaces defined over a field of characteristic zero, moreover, we show that weak del Pezzo fibrations containing vertical cylinders are quite restrictive. 
\end{abstract}

\maketitle

\setcounter{tocdepth}{1}
\section{Introduction}\label{1}
Let $k$ be a field of characteristic zero. 
An open subset $U$ contained in a normal algebraic variety $X$ defined over $k$ is called an {\it $\bA ^s_k$-cylinder}, if $U$ is isomorphic to $Z \times \bA ^s_k$ for some algebraic variety $Z$. 
When the {\rm rank $s$ of cylinder} $U$ is not important, $U$ is just said to be a {\it cylinder}. 
Certainly, cylinders are geometrically simple objects, however, they receive a lot of attention recently from the viewpoint of unipotent group actions on affine cones over polarized varieties (see \cite{KPZ1, KPZ2, KPZ3, KPZ4}). 

As a special type of projective varieties, let us look at Mori Fiber Space defined over $\bC$ (MFS, for simplicity), say $f: X \to Y$. 
Let $r= \dim (X) -\dim (Y)$ be the relative dimension of $f$, where we note that the generic fiber of MFS is of Picard rank one. 
In case of $r=1$, i.e., Mori conic bundle case, a general fiber of $f$ is a smooth rational curve $\bP ^1_{\bC}$, so it contains obviously the affine line $\bA^1_{\bC}$. 
Hence to some extent, it seems reasonable to expect that a family of affine lines found in general fibers would be unified to yield an $\bA^1_{\bC}$-cylinder in $X$ respecting the structure of $f$ (in other words, a {\it vertical $\bA^1_{\bC}$-cylinder} with respect to $f$ (see Definition \ref{vertical})). 
But in fact, it follows that $X$ admits a vertical $\bA^1_{\bC}$-cylinder if and only if the generic fiber $X_\eta=f^{-1} (\eta )$ of $f$, which is isomorphic to a smooth conic in the projective plane $\bP _{\bC  (Y) }^2$ defined over the function field $\bC (Y)=\bC (\eta )$ of the base variety, has a $\bC (Y)$-rational point. 
On the other hand, as for the case of $r=2$, i.e., $f: X \to Y$ is a del Pezzo fibration, the criterion for $X$ to contain a vertical cylinder with respect to $f$ becomes to be more subtle (see \cite{DK18}), namely, $X$ admits a vertical $\bA^1_{\bC}$-cylinder if and only if the degree of the del Pezzo fibration is greater than or equal to $5$ in addition to the existence of a $\bC (Y)$-rational point on the generic fiber $X_\eta$ of $f$. 
This article will deal mainly with criteria concerning the existence of vertical cylinders found on a {\it weak del Pezzo fibration}, which is the generalization of a del Pezzo fibration (see Definition \ref{defwdp} below): 
\begin{definition}\label{defwdp} 
A dominant projective morphism $f: X \to Y$ of relative dimension two between normal varieties defined over $\bC$ such that total space $X$ has only $\bQ$-factorial terminal singularities is called a {\it weak del Pezzo fibration} if the generic fiber $X_\eta$ is a weak del Pezzo surface, which is minimal over the field $\bC (Y)$ of rational functions on the base variety (see \S \S \ref{2-1}, for definitions). 
\end{definition} 
\begin{remark}
Let $f: X \to Y$ be a weak del Pezzo fibration and let $X_{\eta}$ be the generic fiber of $f$, which is a minimal weak del Pezzo surface defined over the field $\bC (Y)$. 
Then the Picard rank $\rho (X_{\eta})$ is actually equal to either $1$ or $2$. 
Notice that $f$ is a del Pezzo fibration if and only if $\rho (X_{\eta})=1$. 
\end{remark} 
We have to define vertical cylinders which play an important role in this article: 
\begin{definition}\label{vertical}
Let $\varphi : V \to W$ be a dominant projective morphism of relative dimension $r\ge 1$ defined over $\bC$. 
An open subset $U$ of $V$ is called a {\it vertical $\bA ^s_{\bC}$-cylinder with respect to} $\varphi$ if the following two conditions hold: 
\begin{itemize} 
\item $U$ is an $\bA ^s_{\bC}$-cylinder $\bA ^s_{\bC} \times Z$ for a certain algebraic variety $Z$. 
\item There exists a dominant morphism $\psi : Z \to Y$ (of relative dimension $r-s$) such that the restriction of $\varphi$ to $U$ coincides with $\psi \circ pr_Z$. 
\end{itemize}
\end{definition}
For a weak del Pezzo fibration $f:X \to Y$ over $\bC$, by definition, provided that $X$ contains a vertical $\bA ^s_{\bC}$-cylinder with respect to $f$, the general fiber $X_y = f^{-1}(y)$ contains an $\bA ^s_{\bC}$-cylinder. 
But, the converse does not hold true in general. 
More precisely, the following fact is known: 
\begin{lemma}[{\cite[Lemma 3]{DK18}}]
Let $\varphi :V \to W$ be a dominant morphism defined over $\bC$. 
Then $\varphi$ admits a vertical $\bA ^s_{\bC}$-cylinder if and only if the generic fiber $V_{\eta} = \varphi ^{-1}(\eta )$, which is defined over the field $\bC (W)=\bC (\eta )$, contains an $\bA ^s_{\bC (W)}$-cylinder.  
\end{lemma}
The main interest in the article lies in a criterion about the existence of a vertical cylinder found in weak del Pezzo fibrations $f: X \to Y$ over $\bC$. 
As just above mentioned, $X$ contains a vertical $\bA ^s_{\bC}$-cylinder with respect to $f$ if and only if the generic fiber $X_\eta$ of $f$, which is a minimal weak del Pezzo surface defined over the field $\bC (\eta )=\bC (Y)$, contains an $\bA ^s _{\bC (Y)}$-cylinder. 
Thus, the following problem is essential for our purpose: 
\begin{prob}\label{prob1}
Let $k$ be a field of characteristic zero, and let $S$ be a minimal weak del Pezzo surface defined over $k$. Then: 
\begin{enumerate} 
\item Classify minimal weak del Pezzo surfaces defined over $k$. 
\item In which case does $S$ contain an $\bA ^1_k$-cylinder, or more idealistically the affine plane $\bA ^2_k$? 
\end{enumerate} 
\end{prob} 
The main results in the article, which is concerned with Problem \ref{prob1}, are summarized in the following two theorems. 

As for Problem \ref{prob1}(1), it is known that any minimal weak del Pezzo surface of degree $4$ over an arbitrary perfect field with anti-canonical divisor not ample is an Iskovskikh surface (see {\cite[Theorem 7.2]{CT88}}). 
Recently, Tamanoi studied minimal weak del Pezzo surfaces of degree $2$ over an arbitrary perfect field with anti-canonical divisor not ample ({\cite{T}}). 
The first result, which completely includes these results for the case of characteristic zero, is summarized as follows: 
\begin{theorem}\label{main(1)}
Let $k$ be a field of characteristic zero and let $S$ be a weak del Pezzo surface, whose $-K_S$ is not ample, defined over $k$ of degree $d:=(-K_S^2)$. 
Then $S$ is minimal if and only if $\rho (S)=2$ and the type of $S$ is one of the following (for the definition of the type of $S$, see \S \S \ref{2-3}): 
\begin{itemize}
\item $d=8$ and $A_1$-type. 
\item $d=4$ and $(2A_1)_<$-type. 
\item $d=2$ and $A_1$, $A_2$ or $(4A_1)_>$-type. 
\item $d=1$ and $2A_1$ or $2A_2$-type. 
\end{itemize}
\end{theorem}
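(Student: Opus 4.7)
My strategy is to first establish $\rho(S) = 2$ and extract a conic bundle structure via Mori theory, then translate the minimality hypothesis into combinatorial constraints on the configuration of $(-1)$- and $(-2)$-curves on $S_{\kc}$, and finally carry out a case analysis on the degree $d$.

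Since $-K_S$ is big and nef but not ample, the hypothesis $\rho(S) = 1$ would force $-K_S$ ample by Nakai--Moishezon; hence $\rho(S) \geq 2$. As $K_S$ is not nef (because $-K_S$ is big) and $S$ is minimal over $k$, the Mori-theoretic classification of smooth projective minimal rational surfaces --- namely the conic bundle alternative asserted in Proposition \ref{conic bdl} --- produces a $k$-morphism $\pi \colon S \to \bP^1_k$ with $\rho(S) = 2$ whose general fibers are smooth rational curves. Passing to $S_{\kc}$, one obtains a conic bundle $\pi_{\kc}$ whose singular fibers are chains of smooth rational curves with two extremal $(-1)$-curves and possibly interior $(-2)$-curves (using that $S_{\kc}$ is weak del Pezzo), and the equality $\rho(S_{\kc}) = 10 - d$ controls the total count of fiber components.

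The minimality condition states that every $\Gal$-orbit of a $(-1)$-curve $E$ on $S_{\kc}$ properly contains $E$ and is not a disjoint union of $(-1)$-curves. I would first use this to rule out singular fibers of the shape $E_1 + C_1 + \cdots + C_r + E_2$ with $r \geq 1$: since $E_1$ and $E_2$ are disjoint, a Galois swap between them would yield a forbidden disjoint orbit, while fixing each $E_i$ individually would contradict the ``properly contains $E$'' condition. Hence every singular fiber of $\pi_{\kc}$ reduces to $E + E'$ with two meeting $(-1)$-curves swapped by some element of $\Gal$, and consequently every $(-2)$-curve on $S_{\kc}$ is a section or multisection of $\pi_{\kc}$.

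With these structural restrictions in hand I would sweep through $d = 8, \ldots, 1$. For $d = 8$ one has $S_{\kc} = \bF_2$ and the $A_1$ case is immediate. For each smaller $d$ I enumerate the admissible ADE configurations of $(-2)$-curves on a weak del Pezzo of that degree together with the compatible $\Gal$-pairings of $(-1)$-curves, and identify which survive; in each surviving case the converse is obtained by prescribing an explicit $\Gal$-action that swaps appropriate pairs of $(-1)$-curves while preserving the ruling class. The principal obstacle is this enumeration at $d = 2$ and $d = 1$, where the intersection theory is controlled by the root systems $E_7$ and $E_8$ and many candidate ADE types must be inspected individually; confirming that exactly $A_1, A_2, 4A_1(2)$ at $d = 2$ and $2A_1, 2A_2$ at $d = 1$ admit a minimality-realizing Galois action will be the most delicate part of the argument.
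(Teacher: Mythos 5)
Your opening reductions are sound and agree with the paper: $\rho(S)=2$ plus a Mori conic bundle structure comes from Proposition \ref{conic bdl}, and your argument that minimality forces every singular fibre of $\pi_{\kc}$ to be a pair of meeting $(-1)$-curves interchanged by $\Gal$ (so that all $(-2)$-curves are horizontal) is essentially the argument used in Lemma \ref{bdl}(3). The first genuine gap is in your treatment of the ``if'' direction. You propose to obtain the converse ``by prescribing an explicit Galois action that swaps appropriate pairs of $(-1)$-curves''; that only constructs, for each listed type, \emph{one} example of a minimal surface (as in Example \ref{ex1}). The theorem asserts something universal: every weak del Pezzo surface over every $k$ with $\rho(S)=2$ and type in the list is minimal, and there the Galois action is given to you, not chosen by you. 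The paper proves this universally in Proposition \ref{main(1)'} ((3)$\Rightarrow$(1)): for a suitable blow-down model (Table \ref{table1}) the union $C$ of all $(-1)$-curves meeting $(-2)$-curves is defined over $k$, and by Claim \ref{claim} one has $C\sim (9-d)(e_0-e_1)$, hence $(C^2)=0$; a $k$-contraction to a smooth Picard-rank-one del Pezzo would have to contract components of $C$ and would yield a curve of self-intersection zero on a $\rho=1$ surface, a contradiction. Nothing in your plan replaces this step.

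The second gap is that the heart of the ``only if'' direction --- eliminating all remaining candidate types (all of $d=3,5,6,7$, and for instance $2A_1(2)$ in degree $4$, or $4A_1(1)$, $3A_1(1)$, $3A_1(2)$ in degree $2$) --- is deferred to ``enumerate and inspect individually'' with no mechanism that makes the inspection conclude. Your structural restriction (fibres of the form $E+E'$, $(-2)$-curves horizontal) cannot by itself separate, say, $2A_1(1)$ from $2A_1(2)$ in degree $4$. The paper supplies two tools you are missing: Lemma \ref{du val}, which shows $\rho(S)=2$ already forces the type to be $mA_1$ or $mA_2$ (a single Galois orbit of singularities), cutting the list down to Corollary \ref{min(1)}; and the Galois-invariant counts $|\sM_E(i,j)|$ behind quasi-minimality (Definition \ref{qm}), computed in explicit models (Table \ref{table2}), which in each excluded case exhibit a $(-1)$-curve $E$ meeting exactly $\alpha$ $(-2)$-curves such that every $(-1)$-curve $E'$ with $(E\cdot E')>0$ has a different count --- contradicting minimality, since a Galois orbit of $E$ that is not a disjoint union must contain such an $E'$ with the same invariants. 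You would need to introduce an invariant of this kind (or an equivalent device) for your enumeration to be a proof rather than a list of cases still to be settled; also note a minor imprecision in your fibre argument: the Galois orbit of an extremal component of a long chain fibre can spread over several fibres, so the contradiction is with ``disjoint union of $(-1)$-curves'' rather than with a swap inside one fibre.
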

On the other hand, it is known that the degree of a minimal del Pezzo surface (i.e., a minimal weak del Pezzo surface, whose anti-canonical divisor is ample) $S$ with $\rho (S)=2$ defined over a field $k$ of characteristic zero is equal to $1$, $2$, $4$ or $8$ (see Appendix \ref{5}). 
Thus, minimal weak del Pezzo surfaces are already somehow restrictive objects. 

The next result, which will yield a complete answer to Problem \ref{prob1}(2), says that those containing cylinders are a quite minority: 
\begin{theorem}\label{main(2)}
Let $k$ be a field of characteristic zero, and let $S$ be a minimal weak del Pezzo surface of degree $d:=(-K_S^2)$ and with $\rho (S)=2$ defined over $k$. Then: 
\begin{enumerate}
\item $S$ contains an $\bA ^1_k$-cylinder if and only if $d=8$ and $S$ is endowed with a structure of Mori conic bundle admitting a section defined over $k$. 
\item $S$ contains the affine plane $\bA ^2_k$ if and only if $d=8$ and $S(k) \not= \emptyset$. 
\end{enumerate}
\end{theorem}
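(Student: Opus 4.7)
The plan is to combine Theorem \ref{main(1)} with the classification of the $-K_S$-ample cases recorded in Appendix \ref{5} to enumerate the (finitely many) types of minimal weak del Pezzo surface $S/k$, and then type by type to decide whether an $\bA^1_k$-cylinder (resp.\ the affine plane $\bA^2_k$) fits inside $S$. The essential dichotomy is between $d=8$, where cylinders exist precisely under the stated section hypothesis, and $d\le 4$, where they do not exist at all.

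\textbf{The case $d=8$.} Here $\rho(S)=2$ and, by the Remark following Definition \ref{defwdp}, $S$ carries a Mori conic bundle $\pi:S\to B$ over a smooth genus-zero curve $B$; geometrically $S_{\kc}$ is $\bP^1\times\bP^1$ in the smooth case and $\bF_2$ in the weak $A_1$-case, so in either situation $\pi$ is an honest $\bP^1$-bundle with no degenerate fibres. For the ``if'' direction of (1), assume $\pi$ has a $k$-rational section $\sigma$; then $S\setminus\sigma(B)\to B$ is a Zariski-locally trivial $\bA^1$-bundle, and restricting to any affine open $U\subset B$ (say $U=B\setminus\{p\}$ for a closed point $p\in B$) trivializes the bundle, because every $\sO$-torsor over an affine curve is trivial, producing an open subset isomorphic to $\bA^1_k\times U\subset S$. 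For the ``if'' direction of (2), $S(k)\ne\emptyset$ forces $B(k)\ne\emptyset$ via $\pi$, whence $B\cong\bP^1_k$; a $k$-section of $\pi$ then exists automatically (in the weak $A_1$-case the unique $(-2)$-curve is Galois-stable and already is a section, while in the smooth case a $k$-point together with one of the two projections furnishes one), and choosing $U\cong\bA^1_k$ above the same recipe yields $\bA^2_k\subset S$. For the ``only if'' direction of (1), an $\bA^1_k$-cylinder $U\cong\bA^1_k\times Z\subset S$ produces a rational pencil of rational curves on $S$ whose class in $\Pic(S_{\kc})$ is Galois-stable; since $\rho(S)=2$, the only such pencil is the one induced by $\pi$, and the boundary $S\setminus U$ must therefore contain a $k$-rational one-section of $\pi$. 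Part (2) ``only if'' is immediate.

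\textbf{The case $d\le 4$.} Suppose for contradiction that $U\cong\bA^1_k\times Z\subset S$. As above the pencil associated to the projection $U\to Z$ agrees with the Mori conic bundle $\pi:S\to B$ given by $\rho(S)=2$, and $S\setminus U$ must contain a $k$-section $\sigma$ of $\pi$ together with every degenerate fibre component of $\pi$. For each type in Theorem \ref{main(1)} and each ample case of Appendix \ref{5} with $d\in\{1,2,4\}$, the configuration of $(-1)$- and $(-2)$-curves on $S_{\kc}$ and the singular fibres of $\pi$ are explicitly determined, so the candidates for $\sigma_{\kc}$ are among finitely many Galois-orbits of $(-1)$-curves of $\pi$-degree $1$. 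The plan is to verify case by case that either no such Galois-orbit consists of a single $(-1)$-curve (so that no section descends to $k$), or the orbit meets the $(-2)$-configuration in a way that, together with the required boundary, produces a Galois-stable contractible $(-1)$-orbit disjoint from the $(-2)$-configuration, contradicting minimality of $S$.

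\textbf{Main obstacle.} The delicate part is the impossibility in the $d\le 4$ regime: ruling out Galois-stable sections of the Mori conic bundle on each of the types from Theorem \ref{main(1)} and the appendix. The $d=4$, $2A_1(1)$-type and the $d=2$, $4A_1(2)$-type will be the most intricate, since the $(-2)$-configuration interacts with the candidate section-orbits of $(-1)$-curves in a way that is not purely numerical and requires precise control of the Galois action on the $(-1)$-curve configuration of a weak del Pezzo surface of the given degree.
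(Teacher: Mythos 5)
Your treatment of the case $d\le 4$ has a genuine gap, and it sits exactly where the real difficulty of the theorem lies. You assert that the pencil coming from the cylinder $U\simeq\bA^1_k\times Z$ ``agrees with the Mori conic bundle $\pi$'', i.e.\ is base point free, and then propose to rule out $k$-rational sections of $\pi$ by a type-by-type analysis of Galois orbits of $(-1)$-curves. Nothing forces the pencil to be base point free; and if it were, no case analysis would be needed at all, because for $d<8$ a Mori conic bundle on a minimal $S$ admits no section defined over $k$ by the short Galois argument on a singular fibre (Lemma \ref{bdl}(3)). The essential situation, which your plan does not address, is a pencil $\sL$ with nonempty base locus: the paper shows (Claim \ref{basepoint}) that $\Bs(\sL)$ is a single $k$-rational point $p$, writes $\sL\sim_{\bQ}a(-K_S)+bF$ with $F$ the fibre of $\pi$ through $p$, and obtains a contradiction from a multiplicity estimate at $p$ — a variant of Corti's inequality (Lemma \ref{Corti}) exploiting that the pair is not log canonical at $p$ — combined with intersection against the Galois orbit of a $(-2)$-curve when $-K_S$ is not ample, and with the second conic bundle structure of Lemma \ref{MCBs} when $-K_S$ is ample. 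A cylinder whose pencil has a base point need not respect the conic bundle in any way, so enumerating Galois-stable section candidates among $(-1)$-curves cannot substitute for this singularity-theoretic argument; your proposed case check of the $2A_1(1)$ and $4A_1(2)$ types would never terminate in a proof.

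A smaller but real gap occurs in the ``only if'' direction of (1) for $d=8$: the claim that, because $\rho(S)=2$, the only Galois-stable rational pencil is the one induced by $\pi$ is false — when $\rho_k(S)=\rho(S_{\kc})=2$ the Galois action on $\Pic(S_{\kc})$ is trivial, so every class is Galois-stable, and rational pencils with base points certainly exist and could a priori carry the cylinder. The statement survives, but along the paper's route (Lemma \ref{lem4-1(2)}): if $\sL$ is base point free its boundary contains a section; otherwise the one-place-at-infinity property of the $\bA^1$-fibres forces $\Bs(\sL)$ to be a single $k$-rational point, and then $S(k)\neq\emptyset$ yields a section by Lemma \ref{lem4-1(1)}. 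Your ``if'' directions for $d=8$ (section $\Rightarrow$ $\bA^1$-cylinder, $k$-point $\Rightarrow$ $\bA^2$) are essentially correct and parallel Lemma \ref{lem4-1(3)} and the proof of Proposition \ref{prop}(2).
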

Note that Theorem \ref{main(2)} excludes the case of Picard rank one since it has already been studied by {\cite{DK18}}. 
As an application of Theorems \ref{main(1)} and \ref{main(2)}, we obtain the following result: 
\begin{cor}\label{cor1}
Let $f:X \to Y$ be a weak del Pezzo fibration but not a del Pezzo fibration defined over $\bC$, let $X_{\eta}$ be the generic fiber of $f$ and let $d$ be the degree of $f$, i.e., the degree of $X_{\eta}$. Then: 
\begin{enumerate}
\item $d\in \{ 1,2,4,8\}$. 
\item $f$ admits a vertical $\bA ^1_{\bC}$-cylinder if and only if $d=8$ and $X_{\eta}$ is endowed with a structure of Mori conic bundle admitting a section defined over $\bC (\eta )=\bC (Y)$. 
\item $f$ admits a vertical $\bA ^2_{\bC}$-cylinder if and only if $d=8$ and $X_{\eta}(\bC (Y)) \not= \emptyset$. 
\end{enumerate}
\end{cor}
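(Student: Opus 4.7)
The plan is to reduce everything to statements about the generic fiber $X_\eta$ and then invoke the main theorems with $k=\bC(\eta)=\bC(Y)$. By Definition \ref{defwdp}, $X_\eta$ is a minimal weak del Pezzo surface over the characteristic-zero field $\bC(Y)$, so both Theorems \ref{main(1)} and \ref{main(2)} apply directly to it, and the degree of $f$ equals $d=(-K_{X_\eta})^2$.

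For (1), I would split into two cases according to whether $-K_{X_\eta}$ is ample. If $-K_{X_\eta}$ is not ample, Theorem \ref{main(1)} immediately yields $d\in\{1,2,4,8\}$. If $-K_{X_\eta}$ is ample, then $X_\eta$ is a minimal smooth del Pezzo surface defined over $\bC(Y)$, and the classification of such surfaces recalled in Appendix \ref{5} forces again $d\in\{1,2,4,8\}$. Together these two cases give (1).

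For (2) and (3), the bridge from the fibration to its generic fiber is the equivalence \cite[Lemma 3]{DK18}, already recalled in the introduction: $X$ contains a vertical $\bA^s_\bC$-cylinder with respect to $f$ if and only if $X_\eta$ contains an $\bA^s_{\bC(Y)}$-cylinder. Applying this with $s=1$ and $s=2$ translates the two assertions into statements about cylinders on $X_\eta$, and these are exactly the content of Theorem \ref{main(2)}(1) and (2) applied with $k=\bC(Y)$. The Mori conic bundle structure and the $\bC(Y)$-rational point appearing in Theorem \ref{main(2)} match those in the corollary verbatim, since by Proposition \ref{conic bdl} the generic fiber has Picard number $2$ and therefore does carry a Mori conic bundle structure over $\bC(Y)$.

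I do not expect a genuine obstacle: the corollary is essentially a dictionary between the fibration $f$ and the geometry of its generic fiber over the function field $\bC(Y)$, and the whole work is concentrated in Theorems \ref{main(1)} and \ref{main(2)}. The only points that deserve explicit mention in writing the argument are that $\bC(Y)$ is of characteristic zero (so the hypotheses of the theorems are met) and that $X_\eta$ has the required minimality from Definition \ref{defwdp}.
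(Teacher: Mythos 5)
Your reduction is exactly the one the paper intends: translate each assertion to the generic fiber via \cite[Lemma 3]{DK18}, apply Theorem \ref{main(1)} (non-ample case) together with Appendix \ref{5} (ample case, using Proposition \ref{conic bdl} to get $\rho(X_\eta)=2$) for part (1), and Theorem \ref{main(2)} with $k=\bC(Y)$ for parts (2) and (3). This matches the paper's (implicit) proof, so there is nothing to add.
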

Furthermore, we shall consider a weak del Pezzo fibration $f:X \to Y$ of degree $8$ such that $Y$ is a curve, so that $X$ is a threefold. 
By using the Tsen's theorem to the generic fiber $X_{\eta}$ of $f$, we see that $X_{\eta}$ allows a $\bC (Y)$-rational point (see also {\cite[Theorem 3.12]{H09}}). 
Thus, by Corollary \ref{cor1}(3) and {\cite[Corollary 14]{DK18}}, we finally obtain the following corollary: 
\begin{cor}
Let $f:X \to Y$ be a weak del Pezzo fibration defined over $\bC$. 
If $f$ is of degree $8$ and $Y$ is a curve, then $f$ always admits a vertical $\bA ^2_{\bC}$-cylinder. 
\end{cor}
The scheme of the article proceeds as follows: 
In $\S \ref{2}$, we shall summarize basic properties on weak del Pezzo surfaces $S$ defined over a field $k$ of characteristic zero. It is well known that the degree $d=(-K_S^2)$ of $S$ is in the range $1 \leq d \leq 8$. 
In $\S \ref{3}$, we shall give the proof of Theorem \ref{main(1)}. 
For this purpose, we need to calculate intersection numbers related to certain divisors on weak del Pezzo surfaces defined over an algebraically closed field in order to look for some specific $(-1)$-curves therein. 
In $\S \ref{4}$, we shall give the proof of Theorem \ref{main(2)}. 
This proof will be divided according to the degree $d$, more precisely the case of $d=8$ and the case of $d<8$, separately. 
In $\S \S \ref{4-1}$, at first we deal with the case of $d=8$, i.e., the case where $S$ is a $k$-form of $\bP ^1_{\kc} \times \bP ^1_{\kc}$ or the Hirzebruch surface $\bF _2$ of degree two. 
We will notice however that $S$ does not necessarily admit $k$-rational points (compare the fact that any del Pezzo surface of Picard rank $1$ over $k$ containing a cylinder admits $k$-rational points, see \cite{DK18}). 
In $\S \S \ref{4-2}$,  we deal with the case of $d<8$, so that $d$ is equal to either $1$, $2$ or $4$ by the result in \S \ref{3}, and prove that $S$ does not contain any cylinder in consideration of the result in $\S \ref{2}$. One of the most important ingredients for the proof at this step is a variant of Corti's inequality (cf. \cite[Theorem 3.1]{C00}). 
In Appendix \ref{5}, we will give the proof for the fact that the degree of a minimal smooth del Pezzo surface of Picard rank $2$ is equal to $1,2,4$ or $8$. 
Although this fact seems to be well known, we will yield the proof for the readers' convenience since we could not find proof in the literature. 
Appendix \ref{6} will summarize the classification of weak del Pezzo surfaces, whose anti-canonical divisor is not ample, defined over an algebraically closed field of characteristic zero. 
\section{Preliminaries}\label{2}
In this section, let $k$ be a field of characteristic zero. 
\subsection{Basic properties about weak del Pezzo surfaces}\label{2-1}
In this article, a {\it weak del Pezzo surface} means a smooth projective surface such that its anti-canonical divisor is nef and big. 
In this subsection, we prepare the basic but important properties of weak del Pezzo surfaces in the subsequent argument. 

Let $S$ be a weak del Pezzo surface defined over $k$ and let $S_{\kc}$ be the base extension of $S$ to algebraic closuer $\kc$, i.e., $S_{\kc} := S \times _{\Spec (k)} \Spec (\kc )$. 
Then the following two lemmas hold: 
\begin{lemma}\label{bir}
With the notation as above, then $S_{\kc}$ is isomorphic to $\bP ^1_{\kc} \times \bP ^1_{\kc}$ or the Hirzebruch surface $\bF _2$ of degree $2$, or a blow-up at most eight points, which may include infinitely near points, from $\bP ^2 _{\kc}$. 
\end{lemma}
\begin{proof}
See, e.g., {\cite[Theorem 8.1.15]{D12}}. 
\end{proof}
\begin{lemma}\label{(-1)-curve}
With the notation as above, let $D$ be a divisor on $S_{\kc}$. 
If $(D ^2)=-1$, $(D \cdot -K_S)=1$ and $(D \cdot M) \ge 0$ for any $(-2)$-curve $M$ on $S_{\kc}$, then $D$ is a $(-1)$-curve. 
\end{lemma}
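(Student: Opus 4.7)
The plan is to show that $D$ is linearly equivalent to an effective divisor, isolate its unique irreducible component that meets $-K_{S_{\kc}}$, and then rule out the remaining $(-2)$-curve contributions by combining the hypothesis $D\cdot M\geq 0$ with lattice-theoretic considerations on $S_{\kc}$.

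First, I would establish effectivity of $D$ via Riemann--Roch. By Lemma~\ref{bir}, $\chi(\sO_{S_{\kc}})=1$, so Riemann--Roch gives
$\chi(\sO(D)) = 1+\tfrac{1}{2}(D^2 - D\cdot K_{S_{\kc}}) = 1$. By Serre duality, $h^2(D) = h^0(K_{S_{\kc}}-D)$; since $(K_{S_{\kc}}-D)\cdot(-K_{S_{\kc}}) = -K_{S_{\kc}}^2 - 1 < 0$ and $-K_{S_{\kc}}$ is nef, $K_{S_{\kc}}-D$ cannot be effective. Hence $h^2(D)=0$ and $h^0(D)\geq 1$, so I may assume $D$ is itself effective.

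Next, I would analyze the irreducible decomposition $D=\sum n_i\Gamma_i$ with $n_i>0$. Because $-K_{S_{\kc}}$ is nef and $\sum n_i(\Gamma_i\cdot(-K_{S_{\kc}}))=1$, precisely one component $C_0$ appears with coefficient $1$ and satisfies $C_0\cdot(-K_{S_{\kc}})=1$, while every remaining component $\Gamma_i$ satisfies $\Gamma_i\cdot K_{S_{\kc}}=0$. For each such $\Gamma_i$, Hodge index applied to the big class $-K_{S_{\kc}}$ forces $\Gamma_i^2<0$, and adjunction then identifies $\Gamma_i$ as a $(-2)$-curve. Thus $D=C_0+N$, where $N$ is an effective non-negative integer combination of $(-2)$-curves.

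Finally, the hypothesis $D\cdot M\geq 0$ applied to each $(-2)$-curve in the support of $N$ yields $D\cdot N\geq 0$, so $D\cdot C_0\leq -1$. Since $C_0$ is not itself a $(-2)$-curve (as $C_0\cdot(-K_{S_{\kc}})=1$), $C_0$ does not appear in $N$, hence $N\cdot C_0\geq 0$; expanding $D\cdot C_0=C_0^2+N\cdot C_0$ gives $C_0^2\leq -1$. Since $C_0$ is irreducible with $C_0\cdot K_{S_{\kc}}=-1$, adjunction forces $C_0^2=-1$ (whence $C_0$ is a $(-1)$-curve) and simultaneously $N\cdot C_0=0$. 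Then $-1=D^2=C_0^2+2N\cdot C_0+N^2$ gives $N^2=0$, and a second application of Hodge index, together with effectivity of $N$ (which has zero intersection with the big class $-K_{S_{\kc}}$), forces $N=0$. Therefore $D=C_0$.

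The main obstacle is the last step, where the hypothesis on the $(-2)$-curves, the negative self-intersection $D^2=-1$, and adjunction must be combined simultaneously to pin down both $C_0^2=-1$ and $N^2=0$. The assumption $D\cdot M\geq 0$ is essential here: without it one could not exclude configurations such as a genuine $(-1)$-curve plus a nontrivial chain of $(-2)$-curves whose intersection numbers would otherwise be consistent with $D^2=-1$ and $D\cdot(-K_{S_{\kc}})=1$.
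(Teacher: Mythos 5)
Your proof is correct. Note that the paper does not argue this lemma at all: it simply cites Dolgachev (Lemma 8.2.22 of \cite{D12}), so what you have written is a self-contained substitute for that reference, and it follows the standard route one would expect there. All the key steps check out: Riemann--Roch with $\chi(\sO_{S_{\kc}})=1$ and $D\cdot(D-K_{S_{\kc}})=0$ gives $h^0(D)\ge 1$ once $h^2(D)=h^0(K_{S_{\kc}}-D)=0$ is excluded by nefness of $-K_{S_{\kc}}$; nefness isolates a single component $C_0$ with coefficient $1$ and $C_0\cdot(-K_{S_{\kc}})=1$, while Hodge index (including its equality case, where one must observe that an irreducible curve is never numerically trivial) plus adjunction identifies every other component as a $(-2)$-curve; and the hypothesis $(D\cdot M)\ge 0$ then forces $N\cdot C_0=0$, $C_0^2=-1$, $N^2=0$, and finally $N\equiv 0$, hence $N=0$ since an effective nonzero divisor meets an ample class positively. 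The only cosmetic point is the conclusion: what you obtain is that $D$ is linearly equivalent to the $(-1)$-curve $C_0$, which is exactly the sense in which the lemma is used in the paper (recognizing which classes in $\Pic(S_{\kc})$ are classes of $(-1)$-curves), so this is the intended statement.
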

\begin{proof}
See {\cite[Lemma 8.2.22]{D12}}. 
\end{proof}
In this article, we shall mainly deal with minimal weak del Pezzo surfaces, where a weak del Pezzo surface $S$ defined over $k$ is {\it minimal} over $k$ if any orbit of a $(-1)$-curve $E$ on the base extension $S_{\kc}$ with respect to the Galois action $\Gal$ properly contains $E$ and is not a disjoint union of $(-1)$-curves. 
The following property of minimal weak del Pezzo surfaces is known: 
\begin{prop}\label{conic bdl}
With the notation as above, assume further that $S$ is minimal over $k$ and of the Picard rank $\rho (S)$ strictly more than $1$. 
Then $\rho (S)$ is equal to $2$ and $S$ is endowed with a structure of Mori conic bundle defined over $k$, i.e., it is equipped with a morphism $\pi: S \to B$ over a smooth projective curve $B$ defined over $k$ such that any fiber of the base extension $\pi_{\kc}: S_{\kc} \to B_{\kc}$ is isomorphic to the plane conic (not necessarily irreducible). 
\end{prop}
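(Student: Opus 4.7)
The plan is to run one step of the Mori program for $S$ over $k$ and to identify the resulting extremal contraction using the minimality hypothesis. Since $-K_S$ is nef and big, the canonical class $K_S$ is not pseudo-effective, and $\overline{\mathrm{NE}}(S)$ lies in the closed half-space $\{K_S \le 0\}$ but is not contained in the hyperplane $\{K_S = 0\}$. The Kleiman-Mori cone theorem applied to $S$ over $k$ (applicable since $S$ is geometrically rational by Lemma \ref{bir}) then produces at least one $K_S$-negative extremal ray $R$ of $\overline{\mathrm{NE}}(S)$.

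Next, I would appeal to the classical structure theorem for $K_S$-negative extremal contractions of smooth projective surfaces over a field of characteristic zero (Iskovskikh's classification of minimal geometrically rational surfaces over $k$): the contraction $\pi_R \colon S \to S'$ associated to $R$ falls into exactly one of three types, namely (i) a contraction to a point $S' = \mathrm{Spec}(k)$, which forces $\rho(S) = 1$; (ii) a conic bundle $\pi \colon S \to B$ over a smooth projective curve $B/k$ with $\rho(S/B) = 1$; or (iii) a birational divisorial contraction over $k$ of a single Galois orbit of pairwise disjoint $(-1)$-curves on $S_{\bar k}$ to a smooth closed point. Case (i) is excluded by the hypothesis $\rho(S) > 1$, and case (iii) is excluded by the minimality assumption, which forbids precisely the existence of a Galois orbit of $(-1)$-curves that is a disjoint union. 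Hence only case (ii) survives, giving the desired Mori conic bundle $\pi \colon S \to B$ over $k$; from $\rho(S/B) = 1$ and $\rho(B) = 1$ one obtains $\rho(S) = 2$. The generic fiber of $\pi$ is a smooth geometrically integral conic in $\mathbb{P}^2_{k(B)}$ (a twisted form of $\mathbb{P}^1_{k(B)}$), so general fibers of $\pi_{\bar k} \colon S_{\bar k} \to B_{\bar k}$ are smooth rational curves.

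The main obstacle I foresee is to match carefully the minimality condition stated in the remark preceding Proposition \ref{conic bdl} with the exclusion of the divisorial case (iii). One must invoke a precise form of Iskovskikh's contraction theorem for smooth surfaces over a non-closed field of characteristic zero, and verify that the minimality hypothesis of the paper rules out exactly the Galois orbits of $(-1)$-curves that would be contracted in case (iii).
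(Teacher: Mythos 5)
Your proposal is correct and follows essentially the same route as the paper: the paper's proof is a one-line appeal to the structure theorem for minimal surfaces with $K_S$ not nef over a field of characteristic zero (\cite[Theorem 9.3.20]{P17}, i.e.\ the Iskovskikh-type classification), which is exactly the extremal-contraction trichotomy you spell out, with the point case excluded by $\rho(S)>1$ and the divisorial case excluded by the paper's definition of minimality.
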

\begin{proof}
Since $S$ is minimal and the canonical divisor $K_S$ of $S$ is not nef by assumption, we obtain the assertion by {\cite[Theorem 9.3.20]{P17}}. 
\end{proof}
\subsection{Mori conic bundle on minimal weak del Pezzo surfaces}\label{2-2}
In this article, we say that a surjective morphsim $\pi :X \to Y$ between algebraic varieties defined over $k$ is a {\it $\bP ^1$-fibration} (resp. {\it $\bP ^1$-bundle}) if a general fiber (resp. any fiber) of the base extension $\pi_{\kc}: X_{\kc} \to Y_{\kc}$ is isomorphic to $\bP ^1_{\kc}$. 
Notice that the above definitions for the case that $k$ is algebraically closed coincide with {\cite[Definition 12.4]{M}}. 
Let $S$ be a weak del Pezzo surface defined over $k$ of degree $d$ and of Picard rank $\rho (S) >1$, which is minimal over $k$. 
By Proposition \ref{conic bdl}, it then follows that $\rho (S)=2$ and $S$ is endowed with a structure of Mori conic bundle defined over $k$. 
In this subsection, we shall prepare the basic properties of this Mori conic bundle for later use. 
\begin{lemma}\label{bdl}
With the notation as above, let $\pi : S \to B$ be a Mori conic bundle over $k$. Then: 
\begin{enumerate}
\item $B_{\kc} \simeq \bP ^1_{\kc}$. 
\item $\pi _{\kc}: S_{\kc} \to B_{\kc}$ is a $\bP ^1$-bundle if and only if $d = 8$. 
\item Assume that $d<8$. Then $\pi$ does not admit a section defined over $k$. 
\end{enumerate}
\end{lemma}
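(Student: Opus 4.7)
My plan is to handle the three parts in order. For (1), since $S_{\kc}$ is rational by Lemma \ref{bir} and $\pi _{\kc}$ is dominant with rational generic fiber, the smooth projective curve $B_{\kc}$ is unirational and hence isomorphic to $\bP ^1_{\kc}$ by L\"uroth's theorem.

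For (2), the reverse implication is immediate: a $\bP ^1$-bundle over $\bP ^1_{\kc}$ is a Hirzebruch surface $\bF _n$, for which $(-K)^2 = 8$. For the forward direction, I would combine Noether's formula $12\chi (\sO _{S_{\kc}}) = K_{S_{\kc}}^2 + c_2(S_{\kc})$ with $\chi (\sO _{S_{\kc}}) = 1$ (by rationality) and the hypothesis $K^2 = 8$ to deduce $c_2(S_{\kc}) = 4$, then apply the Euler characteristic formula for a $\bP ^1$-fibration over $\bP ^1$, namely $\chi _{\mathrm{top}}(S_{\kc}) = 4 + \sum _i (\chi _{\mathrm{top}}(F_i) - 2)$ with the sum running over singular fibers, to force $\chi _{\mathrm{top}}(F_i) = 2$ for every singular fiber $F_i$. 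Combined with an adjunction argument ruling out non-reduced fibers via $-K_S \cdot F = 2$, this implies that every fiber of $\pi _{\kc}$ is a smooth $\bP ^1$, so $\pi _{\kc}$ is a $\bP ^1$-bundle.

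For (3), I would suppose toward a contradiction that $\pi$ admits a $k$-section $\sigma$, and set $\Sigma := \sigma (B)$. Let $F \in \Picq$ denote the (rationally normalized) class of a fiber of $\pi$, which is defined over $k$ and satisfies $F^2 = 0$ and $F \cdot \Sigma = 1$; since $\rho (S) = 2$ by Proposition \ref{conic bdl}, the pair $\{ F, \Sigma \}$ is a $\bQ$-basis of $\Picq$. Writing $-K_S \equiv aF + b\Sigma$, the intersection numbers $-K_S \cdot F = 2$ (on a general fiber $\bP ^1$) and $-K_S \cdot \Sigma = \Sigma ^2 + 2$ (by adjunction, using $\Sigma _{\kc} \simeq \bP ^1_{\kc}$) give $b = 2$ and $a = 2 - \Sigma ^2$. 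A direct computation then yields $(-K_S)^2 = 8$, independent of $\Sigma ^2$, contradicting $d < 8$.

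The main obstacle is the fiber analysis in (2): a weak del Pezzo surface could in principle admit reducible or non-reduced fibers that a strict del Pezzo surface cannot, and rigorously ruling these out requires the combined Noether--adjunction argument indicated above.
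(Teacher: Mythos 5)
Your proposal is correct, but part (3) follows a genuinely different route from the paper. For (1) and (2) the paper simply cites \cite[Exercise 3.13]{KSC04}; your L\"uroth argument and the Noether-formula count ($\chi_{\mathrm{top}}(S_{\kc})=12\chi(\sO_{S_{\kc}})-K_{S_{\kc}}^2=4$, so no singular fibers once multiple fibers are excluded by adjunction and reducible fibers are seen to raise the Euler number, the fiber being a tree of rational curves since $p_a(F)=0$) is just the standard proof of that exercise, so nothing is lost there. For (3) the paper argues geometrically: by Tsen's theorem $\pi_{\kc}$ has a section $\Gamma$, by minimality a singular fiber is a union $E+E'$ of two $(-1)$-curves interchanged by $\Gal$ and meeting at a point $p$; since $\Gamma$ avoids $p$ it meets, say, $E$ at a point $q$, whose Galois conjugate lies on $E'$, so a Galois-stable $\Gamma$ would meet the fiber twice --- hence no section is defined over $k$. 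Your argument instead is purely numerical: with a $k$-section $\Sigma$ and $\rho(S)=2$, the classes $F$ and $\Sigma$ span $\Picq$ (their Gram determinant is $-1$), and writing $-K_S\equiv aF+b\Sigma$, the relations $(-K_S\cdot F)=2$ and $(-K_S\cdot \Sigma)=\Sigma^2+2$ force $b=2$, $a=2-\Sigma^2$ and hence $(-K_S)^2=8$ identically, contradicting $d<8$; I checked this computation and it is correct (e.g.\ it reproduces $-K=3F+2\Sigma$, $K^2=8$ on $\bF_1$). What your approach buys is economy: it needs neither Tsen's theorem nor the explicit structure of the singular fibers (hence not even minimality beyond the standing hypothesis $\rho(S)=2$), only adjunction applied to $\Sigma_{\kc}\simeq B_{\kc}\simeq\bP^1_{\kc}$ and the fact that a rational multiple of the geometric fiber class lies in $\Picq$; what the paper's argument buys is the slightly finer geometric information that every section of $\pi_{\kc}$ fails to be Galois-stable, which is in the same spirit as the minimality analysis used elsewhere in the paper.
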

\begin{proof}
In (1) and (2), see {\cite[Exercise 3.13]{KSC04}}. 
We shall show (3). 
By (1), we have $B_{\kc} \simeq \bP ^1_{\kc}$. 
Note that the base extension of $\pi$ to the algebraic closure $\pi _{\kc} : S_{\kc} \to B_{\kc} \simeq \bP ^1_{\kc}$ always admits a section defined over $\kc$, by the Tsen's theorem. 
Let $\Gamma$ be a section of $\pi _{\kc}$. 
By the assumption that $d<8$ and (2), $\pi_{\kc}$ admits a singular fiber $F$. 
We can easily see by the minimality of $S$ that $F$ is the union $E + E'$ of $(-1)$-curves $E$ and $E'$ on $S_{\kc}$ meeting transversely at a point, say $p$, in such a way that $E$ and $E'$ lie in the same $\Gal$-orbit. 
Since $\Gamma$ is a section of $\pi _{\kc}$, $\Gamma$ does not pass through $p$. 
Hence, we may assume that there exists a closed point $q \in E \backslash \{ p\}$ such that $\Gamma$ passes through $q$. 
Since $E$ and $E'$ lie in the same $\Gal$-orbit, there exists a closed point $q' \in E' \backslash \{ p\}$ such that $q$ and $q'$ are contained in the same $\Gal$-orbit. 
This implies that $\Gamma$ is not defined over $k$. 
\end{proof}
The following two lemmas will play important roles in \S \S \ref{4-2}: 
\begin{lemma}\label{MCB}
With the notation as above, any $\bP ^1$-fibration $\pi : S \to B$ over a geometrically rational curve $B$ defined over $k$ is a Mori conic bundle. 
\end{lemma}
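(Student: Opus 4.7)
\emph{Plan.} My strategy is to decompose the claim into the three ingredients one expects of a Mori conic bundle---relative Picard rank one, $K_S$-negativity of the contracted extremal ray, and smooth rational geometric general fibers---and to verify each in turn.

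For the relative Picard rank, I would invoke Proposition~\ref{conic bdl} applied to $S$ (which, being a minimal weak del Pezzo surface with $\rho(S)>1$, carries at least one Mori conic bundle structure) to conclude $\rho(S)=2$, and then observe that any smooth projective geometrically rational curve $B$ over $k$ satisfies $\rho(B)=1$: indeed, $\mathrm{Pic}(B_{\kc})\simeq\bZ$ is free of rank one, and after tensoring with $\bQ$ and taking $\Gal$-invariants one obtains a one-dimensional $\bQ$-vector space, generated e.g.\ by the class of $-K_B$ (of degree $2$, hence Galois-invariant). Hence $\rho(S/B)=\rho(S)-\rho(B)=1$, so $\pi$ is extremal. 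For $K_S$-negativity, a general fiber $F\simeq\bP^1$ of $\pi$ satisfies $F^2=0$, and adjunction on the smooth surface $S$ yields $K_S\cdot F=-2<0$; this identifies $\pi$ with the contraction of the $K_S$-negative extremal ray $\bR_{\ge 0}[F]\subset\NE(S)$, which is of fibering (rather than divisorial) type since $F^2=0$. The third ingredient is immediate from the hypothesis that $\pi$ is a $\bP^1$-fibration: general fibers of $\pi_{\kc}$ are smooth rational curves. Combining the three points, $\pi$ satisfies the definition of a Mori conic bundle recalled in Proposition~\ref{conic bdl}.

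The principal obstacle I anticipate is confirming $\rho(B)=1$ in the case where $B$ has no $k$-rational points (i.e., $B$ is a nontrivial $k$-form of $\bP^1_{\kc}$, such as a smooth conic of index $2$): one has to work with the Galois-invariant part of the geometric Picard group rather than via an actual $k$-rational divisor of degree $1$. The hypothesis $d\in\{1,2,4\}$ appears to fix the context of \S\ref{4-2}, where this lemma is applied, rather than to enter the core argument in an essential way.
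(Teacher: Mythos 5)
Your proposal is correct, but it follows a genuinely different route from the paper. The paper's proof never mentions relative Picard numbers: it uses $d<8$ to produce a singular fiber of $\pi$, uses ampleness of $-K_S$ to see that every singular fiber of $\pi_{\kc}$ is a union $E_1+E_2$ of two $(-1)$-curves meeting transversally, and then invokes minimality of $S$ over $k$ to conclude that $E_1$ and $E_2$ are exchanged by the $\Gal$-action; this relative minimality over $k$ is exactly what makes $\pi$ a Mori conic bundle, and it is also the explicit fiber description that gets reused later (e.g.\ in the Corti-inequality argument of \S\ref{4-2}). Your argument instead is purely numerical: $\rho(S)=2$ from Proposition \ref{conic bdl}, $\rho(B)=1$ for a geometrically rational curve (your handling of the case $B(k)=\emptyset$ via the Galois-invariant class of $-K_B$ is the right fix), hence $\rho(S)-\rho(B)=1$, and $K_S\cdot F=-2$ with $F^2=0$ identifies $\pi$ with the contraction of a $K_S$-negative extremal ray of fiber type. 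This is valid provided one takes the standard definition of a Mori conic bundle (relative Picard number one, $-K$ relatively ample), and the only steps left implicit are routine: $\pi_*\colon N_1(S)\to N_1(B)$ is surjective, so $\ker\pi_*$ is a line whose intersection with the strictly convex cone $\NE(S)$ is the single ray $\bR_{\ge 0}[F]$, which is exactly the set of classes of $\pi$-contracted curves. What each approach buys: yours is shorter, avoids analyzing singular fibers, and (as you observe) does not really use $d\in\{1,2,4\}$ beyond fixing the context, while the paper's proof yields as a byproduct the concrete statement that every singular fiber over $k$ is an irreducible Galois-conjugate pair of $(-1)$-curves, which with your argument one would have to rederive (e.g.\ via Zariski's lemma applied to a Galois-stable subdivisor of a fiber) when it is needed downstream.
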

\begin{proof}
Assume that $\pi _{\kc}$ admits a singular fiber $F$. 
Since $S$ is minimal over $k$, we know that $F$ does not contain any $(-2)$-curve by {\cite[Lemma 1.5]{Z88}}. 
Moreover, $F$ is the union $E_1 + E_2$ of two $(-1)$-curves $E_1$ and $E_2$ on $S_{\kc}$ meeting transversely at a point in such a way that $E_1$ and $E_2$ lie in the same $\Gal$-orbit. 
This implies that $\pi$ is a Mori conic bundle. 
\end{proof}
\begin{lemma}\label{MCBs}
With the notation as above, assume further that $S(k) \not= \emptyset$, $-K_S$ is ample, and $d$ is equal to $1$, $2$ or $4$. 
Then $S$ is endowed with two distinct structures of Mori conic bundles $\pi _i :S \to \bP ^1_k$ defined over $k$ for $i=1,2$ such that $F_1+F_2 \sim \frac{4}{d}(-K_S)$, where $F_i$ is a general fiber of $\pi _{i,\kc}$, which is defined over $k$, for $i=1,2$. 
\end{lemma}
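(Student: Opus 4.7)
The plan is to exploit the two-dimensional Mori-cone picture together with the minimality of $S$ over $k$. By Proposition~\ref{conic bdl} we already have $\rho(S)=2$, so $\NE(S)$ is a pointed $2$-dimensional cone with exactly two extremal rays $R_1, R_2$, both $K_S$-negative since $-K_S$ is ample. The cone theorem yields extremal contractions $\pi_i\colon S\to Y_i$ over $k$ for $i=1,2$. For Part~(1), I would rule out the two degenerate alternatives for each $\pi_i$: it cannot be divisorial, because the minimality of $S$ forbids any $k$-rational $(-1)$-curve and any Galois-invariant disjoint union of $(-1)$-curves on $S_{\kc}$; and it cannot collapse $S$ to a point, for otherwise every curve class would lie in $R_i$, contradicting $\rho(S)=2$. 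What remains is that each $\pi_i\colon S\to B_i$ is a fibration onto a smooth, geometrically rational curve (Lemma~\ref{bdl}(1)) with $-K_S$ being $\pi_i$-ample, hence a Mori conic bundle defined over $k$; the two structures are genuinely distinct because $R_1\ne R_2$.

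For Part~(2), let $F_i\in\Pic(S)$ denote the class of a general fiber of $\pi_i$. Since $S(k)\ne\emptyset$ yields $B_i(k)\ne\emptyset$ (by pushing forward a rational point via $\pi_i$) and hence $B_i\simeq\bP^1_k$, the class $F_i$ is represented by an actual effective divisor over $k$ and lies on $R_i$. By adjunction on the smooth rational curve $F_i$, one has $(F_i^2)=0$ and $(F_i\cdot -K_S)=2$. Since $\{F_1,F_2\}$ is a basis of $\Picq$, I would write $-K_S\equiv a_1F_1+a_2F_2$ with $a_1,a_2\in\bQ_{>0}$; intersecting with $F_j$ for $j\ne i$ yields $a_j(F_1\cdot F_2)=2$, so $a_1=a_2=:a$, and then $(-K_S^2)=2a^2(F_1\cdot F_2)=4a$ forces $a=d/4$. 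Upgrading this numerical equivalence to the stated linear equivalence uses the rationality of $S_{\kc}$ (Lemma~\ref{bir}), which kills torsion in $\Pic$, together with the identification $\Pic(S)=\Pic(S_{\kc})^{\Gal}$ that holds because $S(k)\ne\emptyset$.

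The crux is Part~(1): the subtle point would be ensuring that both extremal rays yield contractions defined over $k$ rather than being swapped by Galois. This is resolved precisely because the hypothesis is $\rho(S)=2$ \emph{over $k$}, which forces two distinct $k$-rational extremal rays in $\NE(S)$ and hence two contractions that are $k$-morphisms by the cone theorem; minimality then rules out the divisorial option. Part~(2) is then a direct intersection-theoretic computation, with $S(k)\ne\emptyset$ used only to guarantee that the identities are realised in $\Pic(S)$ itself rather than merely in $\Picq$.
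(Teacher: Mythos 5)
Your argument is correct, but it reaches the two conic bundle structures by a genuinely different mechanism than the paper. The paper starts from the single structure $\pi_1$ supplied by Proposition \ref{conic bdl} and manufactures the second one explicitly: it sets $D:=\frac{4}{d}(-K_S)-F_1$, shows $\dim|D|\ge 1$ from $(D^2)=0$ and $(-K_S\cdot D)=2$, argues that $|D|$ contains a geometrically irreducible member $F_2$, and checks via $(F_2^2)=0$, $(F_1\cdot F_2)=2$ that $[F_2]$ spans the second extremal ray; part (2) is then immediate because $F_2\sim\frac{4}{d}(-K_S)-F_1$ by construction. You instead contract both extremal rays of $\NE(S)$ over $k$ (both are $K_S$-negative by ampleness) and classify the contractions: the divisorial case is excluded because a birational extremal contraction over $k$ would contract a Galois-stable disjoint union of $(-1)$-curves, each orbit of which contradicts the paper's definition of minimality, and contraction to a point is excluded by $\rho(S)=2$, leaving two fibrations onto smooth geometrically rational curves, i.e.\ two distinct Mori conic bundles. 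This is more conceptual, rests on the same contraction machinery over non-closed fields that the paper already invokes via \cite[Theorem 9.3.20]{P17}, and it makes visible that $S(k)\ne\emptyset$ is not really needed for (1) (it enters only to identify $B_i\simeq\bP^1_k$); the cost is that (2) is no longer free: you recover it by the computation $2=a_j(F_1\cdot F_2)$, $a_1=a_2=d/4$, and then upgrade numerical to linear equivalence using rationality of $S_{\kc}$ (Lemma \ref{bir}) together with the injectivity of $\Pic(S)\to\Pic(S_{\kc})$ --- note that injectivity, which is all you need, already follows from Hilbert 90, so your appeal to $\Pic(S)=\Pic(S_{\kc})^{\Gal}$ is slightly more than necessary but harmless. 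All the individual steps check out, so both routes are valid; the paper's linear-system construction packages (1) and (2) into one argument, while yours separates a cleaner structural proof of (1) from a short numerical verification of (2).
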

\begin{proof}
For any Mori conic bundle $\pi : S \to B$ over $k$, note that $B \simeq \bP ^1_k$, in particular, there exists a general fiber of $\pi _{\kc}$, which is defined over $k$. 
Indeed, since $S$ contains a $k$-rational point, so it its image via $\pi$ (see also Lemma \ref{bdl}(1) and {\cite[Proposition 4.5.10]{P17}}). 

By Proposition \ref{conic bdl}, we see that $\rho (S)=2$ and $S$ is endowed with a structure of Mori conic bundle $\pi _1 :S \to \bP ^1_k$ defined over $k$. 
In particular, there exists a general fiber $F_1$ of $\pi _1$, which is geometrically irreducible. 
By $\rho (S) =2$, the Mori cone $\NE (S)$ contains exactly two extremal rays, say $R_1$ and $R_2$ (cf.\ {\cite[\S 1.3]{KM98}}). 
Moreover, we can assume $R_1 = \bR _{\ge 0}[F_1]$ and we write $R_2 = \bR _{\ge 0}[\ell ]$ for some curve $\ell$ on $S$. 
Noticing that $\frac{4}{d}$ is an integer by $d \in \{ 1,2,4\}$, let $D$ be the divisor on $S$ defined by $D := \frac{4}{d}(-K_S) - F_1$. 
By the Riemann-Roch theorem combined with $(D^2) =0$ and $(-K_S \cdot D) =2$, we obtain $\dim |D_{\kc}| \ge 1$, in particular, $D_{\kc}$ is linearly equivalent to a union $\sum _{i=1}^rC_i$ of some irreducible curves $\{ C_i\}_{1 \le i \le r}$ on $S_{\kc}$. 
Since $-K_S$ is ample, we have $r \le 2$ by $(-K_S \cdot D) =2$, moreover, we can easily see that there are at most finitely many unions $C_1+C_2$ of two irreducible curves $C_1,C_2$ on $S_{\kc}$ with $C_1+C_2 \sim D_{\kc}$. 
Hence, there exists an irreducible curve $\Gamma$ on $S_{\kc}$ such that $D_{\kc} \sim \Gamma$. 
Let $\Gamma '$ be a $\Gal$-orbit of $\Gamma$. 
Thus, we can write $[\Gamma'] =a_1[F_1]+a_2[\ell ]$ in $\NE (S)$ for some non-negative real numbers $a_1,a_2$. 
By $({\Gamma '}^2) =0$ and $(F_1 \cdot \Gamma ') >0$, we obtain $a_1=0$. 
Namely, $\Gamma ' \in R_2$. 
This implies that there exists a Mori conic bundle $\pi _2:S \to \bP ^1_k$, which is different from $\pi _1$, such that a general fiber of $\pi _{2,\kc}$ is linearly equivalent to $\Gamma$ on $S_{\kc}$. 
Furthermore, there exists a general fiber $F_2$ of $\pi _{2,\kc}$, which is defined over $k$. 
By construction of $\pi _2$, we know $F_1+F_2 \sim \frac{4}{d}(-K_S)$. 
\end{proof}
\begin{remark}\label{MCBs(2)}
Assuming that $-K_S$ is not ample, then we have either $(\ell _1^2) \not= 0$ or $(\ell _2^2) \not= 0$ for two curves $\ell _1$ and $\ell _2$ on $S$ such that $\NE (S) = \bR _{\ge 0}[\ell _1] + \bR _{\ge 0}[\ell _2]$. 
Otherwise, we obtain $(\ell _1 \cdot \ell _2) >0$ by virtue of $(-K_S^2)>0$, however, this contradicts $(-K_S \cdot M)=0$, where $M$ is a $\Gal$-orbit of a $(-2)$-curve on $S_{\kc}$. 
Hence, the assertion of Lemma \ref{MCBs} is not true unless $-K_S$ is ample. 
y $(-K_S^2)>0$, however, it is a contradiction to $(-K_S \cdot M)=0$. 
\end{remark}
\subsection{Types of weak del Pezzo surfaces}\label{2-3}
We recall a classification of weak del Pezzo surfaces over an algebraically closed field of characteristic zero, moreover, we define the {\it type} of weak del Pezzo surfaces. 
Almost all parts of this subsection depend on {\cite{AN06, D14, CPW16}} and {\cite[\S 8]{D12}}  (see also {\cite{CP21, CT88, D34, MS}}). 
Let $S$ be a weak del Pezzo surface defined over $k$ of degree $d$ such that $-K_S$ is not ample, and let $S_{\kc}$ be the base extension of $S$. 
If $d=8$, then $S_{\kc}$ is the Hirzebruch surface $\bF _2$ of degree $2$. 
Namely, we have the contraction $\tau :S_{\kc} \to \bP (1,1,2)$ of the minimal section. 
In what follows, we shall consider the case of $d \le 7$. 
We prepare the following definition: 
\begin{definition}\label{same type}
Letting $S_1$ and $S_2$ be two weak del Pezzo surfaces over $\kc$, we say that these surfaces have the {\it same type} if there is an isomorphism $\Pic (S_1) \simeq \Pic (S_2)$ preserving the intersection form that gives a bijection between their sets of classes of $(-1)$-curves and $(-2)$-curves. 
\end{definition}
A classification of weak del Pezzo surfaces up to type seems well known to the experts, but for the reader's convenience, we recall it in what follows. Since there exists a birational morphism $\sigma :S_{\kc} \to \bP ^2_{\kc}$ by Lemma \ref{bir} and the assumption $d \le 7$, we can write $\Pic (S_{\kc}) \simeq \bigoplus _{i=0}^{9-d}\bZ e_i$ preserving the intersection form such that $(e_0^2)=1$, $(e_i^2)=-1$ for $i>0$ and $(e_i \cdot e_j)=0$ for $i,j \ge 0$ with $i \not= j$ (see \ref{3-2-1}, for details). 
Let $R(S_{\kc})$ be the subset of $\Pic (S_{\kc})$ preserving the intersection form defined by: 
\begin{align*}
R(S_{\kc}) := \{ D \in \Pic (S_{\kc})\, |\, (D^2)=-2,\ (D \cdot -K_{S_{\kc}}) = 0\} . 
\end{align*}
By {\cite[Lemma 8.2.6, Proposition 8.2.7]{D12}}, $R(S_{\kc})$ is the root system of type $A_1$, $A_2+A_1$, $A_4$, $D_5$ and $E_{9-d}$ if $d=7$, $d=6$, $d=5$, $d=4$ and $d \le 3$, respectively. 
By {\cite[Proposition 8.2.25]{D12}}, the number $r$ of all $(-2)$-curves on $S_{\kc}$ is less than $10-d$, moreover, letting $M_1,\dots ,M_r$ be all $(-2)$-curves on $S_{\kc}$, the sublattice $L(S_{\kc})$, which is generated by $M_1,\dots ,M_r$, in $R(S_{\kc})$ is a root lattice of rank $r$ corresponding to the intersection matrix with respect to these $(-2)$-curves. 
That is, $L(S_{\kc})$ determines a subsystem of the root system $R(S_{\kc})$. 
More precisely, noticing that the field $k$ is of characteristic zero, $L(S_{\kc})$ is one of the following according to the degree $d$: 
\begin{itemize}
\item $d=7$: the root system of type $A_1$; 
\item $d=6$ (resp. $d=5$, $d=4$, $d=3$): the subsystem of the root system of type $A_2+A_1$ (resp. $A_4$, $D_5$, $E_6$); 
\item $d=2$: the subsystem of the root system of type $E_7$ except for type of $7A_1$; 
\item $d=1$: the subsystem of the root system of type $E_8$ except for types of $7A_1$, $8A_1$ and $D_4+4A_1$. 
\end{itemize}
Noting that the type of root system $L(S_{\kc})$ corresponds to the dual graph of $\sum _{i=1}^rM_i$, we obtain the contraction $\tau :S_{\kc} \to \widetilde{S}$ of all $(-2)$-curves on $S_{\kc}$, where $\widetilde{S}$ is a normal singular del Pezzo surface over $\kc$ with at most Du Val singularities, say {\it Du Val del Pezzo surface} over $\kc$ for short in this article. 
Conversely, for any Du Val del Pezzo surface over $\kc$, its minimal resolution is a weak del Pezzo surface over $\kc$. 
Hence, types of singularities of Du Val del Pezzo surfaces have a one-to-one correspondence with types of root systems of their minimal resolution. 
On the other hand, notice that $\tau$ is defined over $k$ by the construction of $\tau$. This fact will be used in Lemma \ref{min(1)}. 

Now, we say that the type of singularities of $\widetilde{S}$ is called ``Singularities" of $S$. 
Furthermore, we say that the number of $(-1)$-curves on $S_{\kc}$ is called ``$\#$\,Lines" of $S$, where ``$\#$\,Lines" is finite by Lemma \ref{(-1)-curve} and {\cite[Proposition 8.2.19]{D12}}. 
In this article, the triplet $(d,\, \text{Singularities},\, \# \, \text{Lines})$ is called the {\it type} of $S$. 
For two weak del Pezzo surfaces $S_1$ and $S_2$ over $k$, it is known that the types of $S_1$ and $S_2$ (in the sense of the above triplet) are the same if and only if $S_{1,\kc}$ and $S_{2,\kc}$ have the same type (in the sense of Definition \ref{same type}). 
Moreover, it is known that all pairs $(d,\, \text{Singularities})$ can uniquely determine the number of ``$\#$\,Lines" except for the following pairs: 
\begin{align}\label{distinct type}
\begin{aligned}
(d,\, \text{Singularities}) = &(6,A_1),\, (4,A_3),\, (4,2A_1), \\
&(2, A_5+A_1),\, (2, A_5),\, (2, A_3+2A_1),\, (2,A_3+A_1),\, (2,4A_1),\, (2, 3A_1),\\ 
&(1, A_7),\, (1, A_5+A_1),\, (1, 2A_3),\, (1,A_3+2A_1),\, (1, 4A_1). 
\end{aligned}
\end{align}
On the other hand, if the pair $(d,\, \text{Singularities})$ is one of those in the list of (\ref{distinct type}), then it is known that there are exactly two possibilities of the number of ``$\#$\,Lines". 

To simplify the notation, we introduce the notation for the type of weak del Pezzo surfaces instead of the above triplet as follows: 
Let $S$ be a weak del Pezzo surface over $k$ such that the pair $(d, X)$ of the degree and ``Singularities" of $S$ is not in the list in (\ref{distinct type}). 
Then we say that $S$ is of $X$-type. 
On the other hand, let $S_1$ and $S_2$ be two weak del Pezzo surfaces over $k$ such that pairs of the degree and ``Singularities" of them are the same, and their common pair $(d, X)$ is one of those in the list of (\ref{distinct type}). 
Moreover, assume that $\#$\,Lines of $S_1$ is strictly more than $\#$\,Lines of $S_2$. 
Then we say that $S_1$ (resp. $S_2$) is of $(X)_>$-type (resp. $(X)_<$-type). 
The detail is summarized in Table \ref{type} in Appendix \ref{6}, for the reader's convenience. 

The following two cases will play an important role in \S \ref{3}: 
\begin{eg}
Let $S$ be a weak del Pezzo surface of degree $d$ over a field of characteristic zero. 
Let us look at cases $(d,\, {\rm Singularities})=(4,2A_1),\, (2,4A_1)$. There are two possibilities about $\#$\,Lines for each of such cases as follows: 
\begin{itemize}
\item In case of $(d,\, {\rm Singularities})=(4,2A_1)$, if $S$ is of $(2A_1)_>$-type (resp. $(2A_1)_<$-type), then $\# \, \text{Lines}=9$ (resp. $\# \, \text{Lines}=8$). 
\item In case of $(d,\, {\rm Singularities})=(2,4A_1)$, if $S$ is of $(4A_1)_>$-type (resp. $(4A_1)_<$-type), then $\# \, \text{Lines}=20$ (resp. $\# \, \text{Lines}=19$). 
\end{itemize}
As for how to calculate $\#$\,Lines, see Example \ref{inB} in Appendix \ref{6}. 
\end{eg}
\section{Proof of Theorem \ref{main(1)}}\label{3}
In this section, we will prove Theorem \ref{main(1)}. 
Let $k$ be a field of characteristic zero and let $S$ be a weak del Pezzo surface, whose $-K_S$ is not ample, of degree $d$ over $k$. 
\subsection{Quasi-minimal weak del Pezzo surfaces}\label{3-1}
The purpose of this section is to classify minimal weak del Pezzo surfaces with anti-canonical divisor not ample. 
In order to state this classification, we shall introduce a weaker version of being minimal, which depends only on degree and type, the so-called being {\it quasi-minimal}. 
\begin{lemma}\label{min(1)}
With the notation as above, assume further that $\rho (S)=2$. 
Then the type of $S$ is either $mA_1$-type or $mA_2$-type for some positive integer $m$. 
In particular, the type of $S$ is one of the following: 
\begin{itemize}
\item $d=7$ or $8$ and $A_1$-type. 
\item $d=6$ and $A_2$, $2A_1$, $(A_1)_<$ or $(A_1)_>$-type. 
\item $d=5$ and $A_2$, $2A_1$ or $A_1$-type. 
\item $d=4$ and $4A_1$, $3A_1$, $A_2$, $(2A_1)_<$, $(2A_1)_>$ or $A_1$-type. 
\item $d=3$ and $3A_2$, $2A_2$, $4A_1$, $3A_1$, $A_2$, $2A_1$ or $A_1$-type. 
\item $d=2$ and $3A_2$, $6A_1$, $5A_1$, $2A_2$, $(4A_1)_<$, $(4A_1)_>$, $(3A_1)_<$, $(3A_1)_>$, $A_2$, $2A_1$ or $A_1$-type. 
\item $d=1$ and $4A_2$, $3A_2$, $6A_1$, $5A_1$, $2A_2$, $(4A_1)_<$, $(4A_1)_>$, $3A_1$, $A_2$, $2A_1$ or $A_1$-type. 
\end{itemize}
\end{lemma}
\begin{proof}
At first, we show that the type of $S$ is either $mA_1$-type or $mA_2$-type for some positive integer $m$. 
Let $\tau :S \to \widetilde{S}$ be the contraction of all $(-2)$-curves on $S_{\kc}$, where $\tau$ is defined over $k$ (see \S \S \ref{2-3}). 
By virtue of $1 \le \rho (\widetilde{S}) < \rho (S) =2$, it follows that $\widetilde{S}$ is a Du Val del Pezzo surface of $\rho (\widetilde{S}) =1$. 
Hence, we obtain $\rho (S) - \rho (\widetilde{S}) =1$. 
This implies that all $(-2)$-curves on $S_{\kc}$ lie in the same $\Gal$-orbit. 
Thus, it must be that $S$ is either of $mA_1$ or $mA_2$ for some positive integer $m$ and all singularities on $\widetilde{S}_{\kc}$ are transformed to each other by means of the action of $\Gal$. 
Otherwise, by the dual graph of the union of all $(-2)$-curves on $S_{\kc}$, we can easily see $\rho (S) - \rho (\widetilde{S}) >1$, which is a contradiction. 
Moreover, the remaining assertion follows from the above argument by combined with the classification of weak del Pezzo surfaces over algebraically closed fields of characteristic zero (see Table \ref{type} in Appendix \ref{6}). 
\end{proof}
If $S$ is minimal, then the type of $S$ is one of those in the list of Lemma \ref{min(1)} by Proposition \ref{conic bdl}. 
\begin{eg}\label{ex1}
We shall construct an example of a weak del Pezzo surface of degree $4$, which is minimal over $\bR$ as follows: 
Let $\widetilde{S}'$ be the cubic surface defined by: 
\begin{align*}
\widetilde{S}' := \left( \, xy (x + y) - (x - y) (z^2 + w^2) = 0\, \right) \subseteq \bP ^3_{\bR} = {\rm Proj}(\bR [x,y,z,w]). 
\end{align*}
$\widetilde{S}'_{\bC}$ has two singular points $p _{\pm} := [0\! :\! 0\! :\!1\! :\! \pm \sqrt{-1}] \in \widetilde{S}'_{\bC}$, which are Du Val singular points of type $A_2$. 
Let $\tau : S' \to \widetilde{S}'$ be the blow-up at $p_+ + p_-$ over $\bR$, where $p_+ + p_-$ is defined over $\bR$ so that $S'$ is a weak del Pezzo surface of degree $3$ over $\bR$ and of $2A_2$-type. 
In particular, $S' _{\bC}$ contains seven $(-1)$-curves. 
Let $E_0'$, $E_{1,\pm}'$, $E_{2,\pm}'$ and $E_{3\pm}'$ be these $(-1)$-curves on $S'_{\bC}$, which are the strict transform by $\tau _{\bC}$ of curves $\left( \, x=y=0\, \right)$, $\left( \, x=0,\ z = \pm \sqrt{-1} w\, \right)$, $\left( \, y=0,\ z = \pm \sqrt{-1} w\, \right)$ and $\left( \, x+y=0,\ z = \pm \sqrt{-1} w\, \right)$ on $\bP ^3_{\bC}$, respectively. 
Thus, $E'_0$ is defined over $\bR$, on the other hand, $E'_{i,+}$ and $E'_{i,-}$ are contained in the same ${\rm Gal} (\bC /\bR )$-orbit for $i=1,2,3$. 
Hence, we can contract $E'_0$ over $\bR$, say $\rho : S' \to S$ of $E_0'$. By construction, $S$ is then a weak del Pezzo surface of degree $4$ over $\bR$ and of $(2A_1)_<$-type, in other words, $S$ is a minimal resolution of an Iskovskikh surface (see, e.g., {\cite[\S 7]{CT88}}). 
By construction, $S$ is clearly minimal over $\bR$. 
\end{eg}
\begin{remark}\label{ex2}
Note that the minimality of weak del Pezzo surfaces can not be detected by the type only. 
For instance, if we change the defining equation of $\widetilde{S}'$ in Example \ref{ex1} to $xy(x+y)-(x-y)(z^2-w^2) \in \bR [x,y,z,w]$, then $S$ is also a weak del Pezzo surface of degree $4$ and of $(2A_1)_<$-type but not $\bR$-minimal. 
\end{remark}
Now, letting $E$ be any $(-1)$-curve on $S_{\kc}$, if $S_{\kc}$ is minimal, then there exists a $(-1)$-curve $E'$ on $S_{\kc}$ such that $(E \cdot E')>0$ and $|\sM_E (i,j) | = |\sM_{E'}(i,j)|$ for $i=1,2$ and $j=1,2$, where $\sM_C (i,j)$ is the set defined by: 
\begin{align*}
\sM_C (i,j) := \{ M\, |\, M:(-i)\text{-curve on } S_{\kc},\ (C \cdot M) = j\}
\end{align*}
for $i=1,2$, $j=1,2$ and a projective curve $C$ on $S_{\kc}$. 
By noticing this observation, we shall define a weaker version of minimality as follows: 
\begin{definition}\label{qm}
Let the notation be the same as above. 
Then $S$ is {\it quasi-minimal} if the following two conditions hold: 
\begin{itemize}
\item $S$ is either of $mA_1$-type or $mA_2$-type for some positive integer $m$. 
\item For any $(-1)$-curve $E$ on $S_{\kc}$, there exists a $(-1)$-curve $E'$ on $S_{\kc}$ such that $(E \cdot E')>0$ and $|\sM_E (i,j) | = |\sM_{E'}(i,j)|$ for $i=1,2$ and $j=1,2$. 
\end{itemize}
\end{definition}
By definition, if $S$ is minimal, then $S$ is quasi-minimal. 
Furthermore, we actually see that quasi-minimality depends only on the type by the classification of weak del Pezzo surfaces over algebraically closed fields of characteristic zero (see also Definition \ref{same type}). 

Theorem \ref{main(1)} is a consequence of the following proposition: 
\begin{prop}\label{main(1)'}
With the notation as above, the following three conditions are equivalent: 
\begin{enumerate}
\item $S$ is minimal. 
\item $\rho (S)=2$ and $S$ is quasi-minimal. 
\item  $\rho (S)=2$ and the type of $S$ is one of those in the list of Theorem \ref{main(1)}. 
\end{enumerate}
\end{prop}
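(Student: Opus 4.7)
The plan is to prove the three conditions equivalent cyclically, $(1) \Rightarrow (2) \Rightarrow (3) \Rightarrow (1)$, keeping the standing hypothesis that $-K_S$ is not ample throughout so that $S_{\kc}$ contains $(-2)$-curves and the ``type'' makes sense as in \S \S \ref{2-3}.

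For $(1) \Rightarrow (2)$, first observe that the union of all $(-2)$-curves on $S_{\kc}$ is $\Gal$-invariant, and contracting it over $k$ yields a Du Val del Pezzo $\widetilde{S}$ with $\rho(\widetilde{S}) \geq 1$, so $\rho(S) \geq 2$; combined with Proposition \ref{conic bdl} under minimality, this forces $\rho(S) = 2$, and then Lemma \ref{du val} gives that the type is $mA_1$ or $mA_2$. For the intersection condition in Definition \ref{qm}(2), take any $(-1)$-curve $E$ on $S_{\kc}$: minimality asserts that its $\Gal$-orbit properly contains $E$ and is not a disjoint union, whence some conjugate $E' = \sigma E$ satisfies $(E \cdot E') > 0$; Galois equivariance of the intersection pairing then immediately gives $|\sM_E(i,j)| = |\sM_{E'}(i,j)|$ for all admissible $i,j$.

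For $(2) \Rightarrow (3)$, the $mA_1$/$mA_2$ hypothesis together with $\rho(S) = 2$ places us into one of the finitely many configurations recorded in Corollary \ref{min(1)}. For each such type \emph{not} appearing in the list of Theorem \ref{main(1)}, I would exhibit a particular $(-1)$-curve $E$ on $S_{\kc}$ admitting no partner $(-1)$-curve $E'$ with both $(E \cdot E') > 0$ and $|\sM_E(i,j)| = |\sM_{E'}(i,j)|$, thereby violating Definition \ref{qm}(2). The combinatorial input — the number of $(-1)$-curves, their pairwise intersection pattern, and their intersections with the $(-2)$-curves — is precisely the data summarized in Appendix \ref{6} and illustrated in Example \ref{inB}. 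Conversely, for each surviving type one verifies directly that every $(-1)$-curve does admit a suitable partner; together these computations pin down the list in Theorem \ref{main(1)}.

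For $(3) \Rightarrow (1)$, suppose $\rho(S) = 2$ and the type is in the list, but for contradiction that $S$ is not minimal: then some $\Gal$-orbit $O$ of $(-1)$-curves is either $\{E\}$ with $E$ being $k$-rational, or else a disjoint union of conjugates of $E$. The reduced effective divisor $D = \sum_{E_i \in O} E_i$ has class in $\Pic(S)$ with $(D^2) < 0$ and $(-K_S \cdot D) > 0$, so $\bR_{\geq 0}[D]$ is a $K_S$-negative extremal ray of $\NE(S)$. The $k$-rational union of the $(-2)$-curves spans a $K_S$-zero extremal ray, and these two rays exhaust $\NE(S)$ since $\rho(S) = 2$. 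Contracting $D$ over $k$ produces a minimal smooth rational $k$-surface $S'$ of $\rho(S') = 1$, and comparing the resulting configuration of $(-2)$-images and any residual $(-1)$-orbits on $S'$ against the prescribed type of $S$ yields a numerical inconsistency, closing the cycle.

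The main obstacle is $(2) \Rightarrow (3)$: although each case is individually elementary, the list from Corollary \ref{min(1)} is long and several borderline entries — notably $(d,\mathrm{Sing}) = (4, 2A_1)$ and $(2, 4A_1)$, where two sub-types $X(1)$ and $X(2)$ must be separated by subtle configuration data, as well as the $(2, 3A_1)$ and $(1, 4A_1)$ cases — require systematic bookkeeping of how $(-1)$-curves meet $(-2)$-curves on $S_{\kc}$. The direction $(3) \Rightarrow (1)$ is conceptually cleaner but also hinges on exploiting the tight rigidity of the Mori cone when $\rho(S) = 2$ to rule out the disjoint-orbit scenario.
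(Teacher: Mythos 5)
Your overall skeleton $(1)\Rightarrow(2)\Rightarrow(3)\Rightarrow(1)$ coincides with the paper's, and your $(1)\Rightarrow(2)$ is essentially right (one small point: from ``the orbit is not a disjoint union'' you only get that \emph{some} two members meet; you then need Galois homogeneity to move one of them to $E$ itself before concluding $(E\cdot E')>0$). The genuine gap is in $(3)\Rightarrow(1)$. After contracting the offending Galois-stable disjoint union $O$ of $(-1)$-curves to get $\tau\colon S\to V$ with $\rho(V)=1$, the easy case is when $O$ avoids all $(-2)$-curves: their images remain $(-2)$-curves on $V_{\kc}$, while $\rho(V)=1$ together with $-K_V$ nef and big forces $-K_V$ ample, a contradiction. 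But when the contracted curves \emph{do} meet $(-2)$-curves -- which is precisely where the listed types have to be tested -- your closing sentence ``comparing the resulting configuration \dots yields a numerical inconsistency'' is an assertion, not an argument: nothing in your write-up identifies the inconsistency, and it genuinely depends on the type-specific configuration. The paper supplies exactly this missing substance via Claim \ref{claim} and Table \ref{table1}: for each listed type one chooses $\sigma\colon S_{\kc}\to\bP^2_{\kc}$ so that the $(-1)$-curves meeting $(-2)$-curves are precisely those in the classes $e_i$ and $L_{1,i}$ for $i=2,\dots,9-d$; their union $C$ is then Galois-stable with $C\sim(9-d)(e_0-e_1)$, so $(C^2)=0$, every contracted curve is a component of $C$, and $\tau(C)$ becomes a curve of self-intersection $0$ on a surface with $\rho(V)=1$, which is impossible. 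Without this (or some substitute) the cycle does not close. A smaller but real flaw: deducing that $\bR_{\ge 0}[D]$ is an extremal ray merely from $(D^2)<0$ and $(-K_S\cdot D)>0$ is not valid (big classes can have negative square); extremality should instead be read off from the contraction $\tau$ itself -- though in fact your argument never needs it.

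The implication $(2)\Rightarrow(3)$ is also left as a plan: ``I would exhibit a particular $(-1)$-curve'' is exactly the content of the paper's Table \ref{table2}, where for each type of Corollary \ref{min(1)} outside the list of Theorem \ref{main(1)} one fixes $\sigma$, produces $\beta$ pairwise disjoint $(-1)$-curves each meeting exactly $\alpha$ of the $(-2)$-curves, and checks that every $(-1)$-curve $E'$ with $(E\cdot E')>0$ has $|\sM_{E'}(2,1)|\neq\alpha=|\sM_{E}(2,1)|$ (the case $d=1$, $A_1$-type needing the separate invariant $|\sM_E(2,2)|$). Since these case-by-case verifications, together with Claim \ref{claim}, constitute the actual mathematical work of the proposition, deferring them leaves the proof incomplete even though your intended method is the same as the paper's.
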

\begin{remark}
Assume that Proposition \ref{main(1)'} is true and there exists a weak del Pezzo surface $S'$ with $\rho (S')=2$ such that $S$ and $S'$ have the same type. 
Then we see that $S$ is quasi-minimal if and only if the type of $S$ is one of those in the list of Theorem \ref{main(1)}. 
Indeed, by Proposition \ref{main(1)'}, $S'$ is quasi-minimal if and only if $S'$ is one of those in the list of Theorem \ref{main(1)}, moreover, since quasi-minimality depends on the type, $S'$ is quasi-minimal if and only if $S$ is quasi-minimal. 
\end{remark}
Let us prove Proposition \ref{main(1)'}. 
It is clear that {\rm (1)} implies {\rm (2)} in Proposition \ref{main(1)'}. 
Let us show that {\rm (2)} implies {\rm (3)} and {\rm (3)} implies {\rm (1)} in Proposition \ref{main(1)'}. 
In the case of $d=8$, it can be easily seen that these two implications hold, indeed, $S$ is always minimal since $S$ is a $k$-form of the Hirzebruch surface $\bF _2$ of degree two, i.e., $S_{\kc} \simeq \bF _2$. 
However, in the case of $d<8$, the proofs of these two implications are a bit long. 
Thus, we will give the proof for the case of $d<8$ in \S \S \ref{3-2}. 
\subsection{Proof of Proposition \ref{main(1)'}}\label{3-2}
With the notation as above, assume further $d \le 7$. 
\subsubsection{}\label{3-2-1}
In order to prove Proposition \ref{main(1)'}, we shall prepare some notation. 

We shall construct a birational morphism defined over $\kc$ from a weak del Pezzo surface $S_d$ of degree $d$ to the projective plane $\bP ^2_{\kc}$ by explicitly constructing the following composition: 
\begin{align}\label{sigma}
\sigma :S_d \overset{\sigma _{9-d}}{\to} S_{d+1} \overset{\sigma _{8-d}}{\to} \dots \overset{\sigma _2}{\to} S_8 \overset{\sigma _1}{\to} S_9=\bP ^2_{\kc}
\end{align}
such that $S_d$ and $S_{\kc}$ have the same type of $S_{\kc}$ (see Definition \ref{same type}), where $\sigma _i$ is a blow-up at a closed point for $i = 1,\dots ,9-d$. 
Notice that there exists such a birational morphism $\sigma :S_d\to \bP ^2_{\kc}$ by Lemma \ref{bir} and by the assumption $d \le 7$. 
In what follows, we shall take a composite of blowing-ups (\ref{sigma}). 

Let $e_0$ be the strict transform on $S_d$ of a general line on $\bP ^2_{\kc}$ and let $e_i$ be the total transform on $S_d$ of the exceptional divisor of $\sigma _i$ for $i=1,\dots ,9-d$. 
Then $\Pic (S_d)$ can be expressed as the free $\bZ$-module $I_d := \bigoplus _{i=0}^{9-d} \bZ e_i$ with a bilinear form generated by $(e_0^2)=1$, $(e_i^2)=-1$ for $i >0$ and $(e_i \cdot e_j)=0$ for $i,j \ge 0$ with $i \not= j$. 

Letting $M_1,\dots ,M_r$ be all $(-2)$-curves on $S_d$, we note that each $(-2)$-curve corresponds to one of the following element in $I_d$ (see {\cite[Proposition 8.2.7]{D12}}): 
\begin{align}\label{(-2)}
\begin{aligned}
&m^0_{i,j} := e_i - e_j& &(0<i<j \le 9-d,\ d \le 7); \\
&m^1_{i_1,i_2,i_3} := e_0 - (e_{i_1}+e_{i_2}+e_{i_3})& &(0< i_1 < i_2 < i_3 \le 9-d,\ d \le 6); \\
&m^2 := 2e_0 - (e_1+\dots +e_6)& &(d=3); \\
&m^2_{i_1,\dots, i_{3-d}} := 2e_0  - (e_{i_1}+\dots +e_{i_{3-d}})& &(0<i_1<\dots <i_{3-d} \le 9-d,\ d \le 2); \\
&m^3_i := 3e_0  -(e_1+\dots +e_8)- e_i& &(0 < i \le 9-d,\ d=1). 
\end{aligned}
\end{align}

Letting $k_d := -3e_0+e_1+\dots +e_{9-d} \in I_d$, which corresponds to the canonical divisor on $S_d$, we also note that any $e \in I_d$ satisfying $(e^2)=(e \cdot k_d)=-1$ is expressed as one of the following (see {\cite[Proposition 8.2.19]{D12}}): 
\begin{align}\label{(-1)}
\begin{aligned}
&e_i& &(0 <i \le 9-d,\ d \le 7); \\
&\ell _{i,j} := e_0-(e_i+e_j)& &(0 <i<j \le 9-d,\ d \le 7); \\
&2e_0-(e_{i_1}+\dots +e_{i_5})& &(0 <i_1 < \dots <i_5 \le 9-d,\ d \le 5); \\
&-k_2-e_i& &(0 <i \le 7,\ d=2); \\
&-k_1-e_i+e_j& &(0 <i,j \le 8,\ i \not= j,\ d=1); \\
&-k_1+e_0-(e_{i_1}+e_{i_2}+e_{i_3})& &(0 <i_1 < i_2 <i_3 \le 8,\ d=1); \\
&-k_1+2e_0-(e_{i_1}+\dots +e_{i_6})& &(0 <i_1 < \dots <i_6 \le 8,\ d=1); \\
&-2k_1-e_i& &(0 <i \le 8,\ d=1). 
\end{aligned}
\end{align}

By Lemma \ref{(-1)-curve}, the set of all $(-1)$-curves on $S_d$ has one-to-one correspondence to the set of all elements in $({\ref{(-1)}})$ which have non-negative intersection number with elements in $I_d$ corresponding to all $(-2)$-curves on $S_d$. 
Thus, we are able to see the intersection form of all $(-1)$-curves and $(-2)$-curves on $S_{\kc}$ as the surfaces $S_{\kc}$ and $S_d$ have the same type. 
In what follows, we will determine the quasi-minimality of $S$ by studying elements as in ({\ref{(-1)}}) and ({\ref{(-2)}}) according to the type of $S$. 
\begin{remark}
A $(-2)$-curve $M$, which corresponds to an element $m_{i_1,i_2,i_3}^1$ (resp. $m^2$ or $m^2_{i_1,\dots, i_{3-d}}$, $m^3_{i_1}$) in $I_d$, is a strict transform of a line (resp. an irreducible conic, an irreducible cubic with a singular point) by a blow-up at some points on $\bP ^2_{\kc}$, which may include infinitely near points. 
For instance, assuming that $M$ corresponds to $m_{i_1,i_2,i_3}^1$, this blow-up includes infinitely near points if and only if there exists a $(-2)$-curve on $S_d$ corresponding to $m_{i_1,i_2}^0$, $m_{i_1,i_3}^0$ or $m_{i_2,i_3}^0$ in $I_d$. 
\end{remark}
\subsubsection{}
Let us prove that (3) implies (1) in Proposition \ref{main(1)'}. 
Assume that $\rho (S)=2$ and the type of $S$ is one of those in the list of Theorem \ref{main(1)} other than the type of $d=8$. 

We shall take a composite of blowing-ups (\ref{sigma}) in such a way that elements $m_1,\dots ,m_r \in I_d$ corresponding to all $(-2)$-curves on $S_d$ are as in Table \ref{table1} according to the type of $S_d$ (for the notation of their elements, see (\ref{(-2)})), where ``all $(-2)$-curves'' in Table \ref{table1} mean all elements in $I_d$ corresponding to all $(-2)$-curves on $S_d$, respectively. 
\begin{table}
\caption{Configuration of all $(-2)$-curves. }\label{table1}
\begin{center}
 \begin{tabular}{|c|c|llll|} \hline
$d$ & Type & \multicolumn{4}{|c|}{all $(-2)$-curves} \\ \hline \hline

$4$ & $(2A_1)_<$ & $m^0_{1,2}$, & \hspace{-2.5mm}$m^1_{3,4,5}$ & & \\ \hline \hline

$2$ & $A_1$ & $m^2_1$ & & & \\ \hline
$2$ & $A_2$ & $m^1_{2,3,4}$, & \hspace{-2.5mm}$m^1_{5,6,7}$ & & \\ \hline
$2$ & $(4A_1)_>$ & $m^1_{2,3,4}$, & \hspace{-2.5mm}$m^1_{2,5,6}$, & \hspace{-2.5mm}$m^1_{3,5,7}$, & \hspace{-2.5mm}$m^1_{4,6,7}$ \\ \hline \hline

$1$ & $2A_1$ & $m^2_{1,2}$, & \hspace{-2.5mm}$m^3_2$ & & \\ \hline
$1$ & $2A_2$ & $m^1_{2,3,7}$, & \hspace{-2.5mm}$m^1_{4,5,8}$, & \hspace{-2.5mm}$m^1_{6,7,8}$, & \hspace{-2.5mm}$m^2_{7,8}$ \\ \hline
\end{tabular}
\end{center}
\end{table}
By construction, we see $\Pic (S_{\kc}) \simeq \Pic (S_d) \simeq I_d$ preserving the intersection form. 
Then we obtain the following claim: 
\begin{claim}\label{claim}
The following three assertions hold: 
\begin{enumerate}
\item For an arbitrary integer $i$ with $2 \le i \le 9-d$, there exist two $(-1)$-curves $E_{i,+}$ and $E_{i,-}$ on $S_d$ corresponding to elements $e_i$ and $\ell _{1,i}$ in $I_d$, respectively. 
\item If $d \ge 2$, then all $(-1)$-curves meeting at least one $(-2)$-curve on $S_d$ are only $E_{i,+}$ and $E_{i,-}$ for $2 \le i \le 9-d$. 
\item If $d=1$, then all $(-1)$-curves meeting at least two $(-2)$-curves on $S_d$ are only $E_{i,+}$ and $E_{i,-}$ for $2 \le i \le 9-d$. 
\end{enumerate}
\end{claim}
\begin{proof}
In (1), we shall check that intersection numbers $(e_i \cdot m_j)$ and $(\ell _{1,i} \cdot m_j)$ are non-negative for $2 \le i \le 9-d$ and $1 \le j \le r$, however, it is left to the reader since it can be easily shown by explicit computing. 

In (2) and (3), let $E$ be a $(-1)$-curve on $S_d$, let $e$ be an element in $I_d$ corresponding to $E$ and set $m := m_1+ \dots +m_r \in I_d$. 
Noting that $e$ is one of those in the list of (\ref{(-1)}), we shall calculate the intersection number $(e \cdot m)$ according to degree $d$: 

If $d=4$, then $m=e_0+e_1-(e_2+\dots +e_5)$, so that we have: 
\begin{align}\label{d=4}
(e \cdot m) = 
\left\{ \begin{array}{cl}
1 & \text{if}\ e= e_i\ \text{or}\ \ell _{1,i} \\
-1 & \text{otherwise} \\
\end{array} \right. \ (2 \le i \le 5).
\end{align}

If $d=2$ and $S$ is of $A_1$-type or $A_2$-type (resp. $(4A_1)_>$-type), then $m=2e_0-(e_2+\dots +e_7)$ (resp. $m=2\{ 2e_0-(e_2+\dots +e_7)\}$), so that we have: 
\begin{align}\label{d=2}
(e \cdot m) = 
\left\{ \begin{array}{cl}
1\ (\text{resp.}\ 2) & \text{if}\ e= e_i\ \text{or}\ \ell _{1,i} \\
-1\ (\text{resp.}\ -2) & \text{if}\ e=-k_2-e_i\ \text{or}\ -k_2-\ell _{1,i}\\
0 & \text{otherwise} \\
\end{array} \right. \ (2 \le i \le 7), 
\end{align}
where we note $-k_2-\ell _{1,i} = 2e_0 -(e_2+\dots +e_7)+e_i$ for $2 \le i \le 7$. 

If $d=1$, then $m=5e_0-e_1-2(e_2+\dots +e_8) = -2k_1-(e_0-e_1)$, so that we have: 
\begin{align}\label{d=1}
\begin{aligned}
(e_i \cdot m)&=\left\{ \begin{array}{cl}
2 & \text{if}\ i>1 \\
1 & \text{if}\ i=1 \\
\end{array} \right. 
\quad (1 \le i \le 8); \\ 
(\ell _{i,j} \cdot m)&=\left\{ \begin{array}{cl}
2 & \text{if}\ i=1 \\
1 & \text{if}\ i>1 \\
\end{array} \right. 
\quad (1 \le i<j \le 8); \\
(e \cdot m) &< 2 \quad \text{if}\ (e \cdot e_0) \ge 2, 
\end{aligned}
\end{align}
where we note $(e \cdot m) = (e \cdot -2k_1) -(e \cdot e_0-e_1)$ and $(e \cdot e_0-e_1) >0$ by ({\ref{(-1)}}) if $(e \cdot e_0) \ge 2$. 

Therefore, we obtain the assertions (2) and (3) of the claim. 
Indeed, if $d \ge 2$ (resp. $d=1$) and $E$ meets at least one $(-2)$-curve (resp. at least two $(-2)$-curves) on $S_d$, then
$E$ is $E_{i,+}$ or $E_{i,-}$ for some $2 \le i \le 9-d$ by virtue of (\ref{d=4}) and (\ref{d=2}) (resp. (\ref{d=1})). 
\end{proof}
Now, we shall prove that (3) implies (1) in Proposition \ref{main(1)'}. 
Let $S_d$ be the same as above. 
Let $D$ be the reduced union of $(-1)$-curves on $S_{\kc}$ corresponding to elements $e_i$ and $\ell _{1,i}$ in $I_d$ for $2 \le i \le 9-d$ in $I_d$. 
By Claim \ref{claim}, we see that $D$ is defined over $k$. 
Moreover, we have $(D^2)=\left( \sum _{i=2}^{9-d}(e_i+\ell _{1,i}) \right)^2 = (8-d)^2(e_0^2-e_1^2) = 0$. 

Suppose on the contrary that there exists a birational morphism $\tau : S \to V$ to smooth projective surface $V$ with $\rho (V)<\rho (S)$ defined over $k$. 
Then $V$ is a smooth del Pezzo surface of $\rho (V)=1$ (cf. {\cite[Theorem 9.3.20]{P17}}) by virtue of $\rho (S)=2$. 
Hence, there exists a $(-1)$-curve $E$ meeting at least one $(-2)$-curve on $S_{\kc}$ such that $\tau _{\kc}$ is a contraction of the $\Gal$-orbit of $E$. 
Notice that $E$ is not any irreducible component of $D$. 
Otherwise, we have $\tau _{\ast}(D) \not= 0$ and $(\tau _{\ast}(D)^2)=0$ by Claim \ref{claim}(1). This is a contradiction to the fact $\rho (V)=1$. 
Hence, we see that $d=1$ and $E$ meets only one $(-2)$-curve on $S_{\kc}$ by Claim \ref{claim}(2) and (3). 
Let $M_1,\dots ,M_r$ be all $(-2)$-curves on $S_{\kc}$, where $r=2$ (resp. $r=4$) if $S$ is of $2A_1$-type (resp. $2A_2$-type). 
Furthermore, let $s$ be the number of $(-1)$-curves on $S_{\kc}$, which meet a $(-2)$-curve $M_1$ and are contracted by $\tau _{\kc}$, where we note that $s$ is constant not depending on the way to take a $(-2)$-curve $M_1$ on $S_{\kc}$. 
Indeed, all $(-2)$-curves on $S_{\kc}$ lie on the same $\Gal$-orbit since $V_{\kc}$ does not contain any $(-2)$-curve. 
If $S$ is of $2A_1$-type (resp. $2A_2$-type), then the degree of $V_{\kc}$ is equal to $2s+1$ (resp. $4s+1$), which is not equal to $7$ and is at most $9$, and we obtain $0<(\tau _{\ast}(M_1+\dots +M_r)^2) = -4+2s$ (resp. $-4+4s$) by virtue of $\rho (V)=1$. 
Thus, $V_{\kc}$ is of degree $9$, namely, $V_{\kc} \simeq \bP ^2_{\kc}$. 
In particular, the self-intersection number of any irreducible curve on $V_{\kc}$ is a positive square number, however, $(\tau _{\kc ,\ast}(M_1)^2)$ is equal to $2$ (resp. $0$) if $S$ is of $2A_1$-type (resp. $2A_2$-type). It is a contradiction. 
Therefore, we see that $S$ must be $k$-minimal. 
\begin{remark}
Assuming $\rho (S)=2$, we obtain that $S$ is minimal by the above argument. 
Letting $S_d$ be the same as above, for any $2 \le i \le 9-d$, two $(-1)$-curves on $S_{\kc}$, which correspond to $e_i$ and $\ell _{1,i}$ respectively, lie the same $\Gal$-orbit. 
\end{remark}
\subsubsection{}
In order to prove that (2) implies (3) in Proposition \ref{main(1)'}, assume that the type of $S$ is one of those in the list of Lemma \ref{min(1)} such that it does not appear in the list of Theorem \ref{main(1)}. 
Then we shall show that $S$ is not quasi-minimal. 

At first, we deal with the case in which $S$ is of degree $d=1$ and of $A_1$-type. 
We can choose a composite of blowing-ups $(\ref{sigma})$ in such a way that $S_d = S_1$ contains only one $(-2)$-curve corresponding to $m_1^3 \in I_1$ (for the notation of $m_1^3$, see (\ref{(-2)})). 
Letting $e$ be one of those in the list of ({\ref{(-1)}}), we obtain that $(e \cdot m_1^3)=2$ if and only if $e=e_1$, indeed, 
$(e \cdot m_1^3)=(e \cdot -k_1)-(e \cdot e_1)=1-(e \cdot e_1)$. 
Since $S_1$ and $S_{\kc}$ have the same type, there exists a unique $(-1)$-curve $E$ satisfying $(E \cdot M)=2$ on $S_{\kc}$, where $M$ is the unique $(-2)$-curve on $S_{\kc}$. 
This means that $|\sM_E (2,2)|=1$ and there is no $(-1)$-curve $E'$ meeting $E$ on $S_{\kc}$ such that $|\sM_{E'}(2,2)|=1$. Hence $S$ is not quasi-minimal. 

In what follows, we deal with the remaining cases. 
As an example, we shall explain the case in which $S$ is of degree $d=2$ and of $(3A_1)_>$-type. 
Then $S_{\kc}$ contains exactly three $(-2)$-curves. 
Let us put $\alpha := 2$, where we notice that $\alpha$ is smaller than or equal to the number of $(-2)$-curves on $S_{\kc}$. 
Let $\beta$ be the number of $(-1)$-curves on $S_{\kc}$ meeting exactly $\alpha$-times of $(-2)$-curves on $S_{\kc}$. 
In order to determine the value of $\beta$, we shall take a composite of blowing-ups $(\ref{sigma})$ in such a way that $S_d=S_2$ contains exactly three $(-2)$-curves corresponding to $m_{1,2,3}^1,\, m_{1,3,4}^1,\, m_1^2 \in I_2$ (see (\ref{(-2)})). 
Then we see that elements in $I_2$ corresponding to all $(-1)$-curves meeting exactly $\alpha$-times of $(-2)$-curves on $S_2$ are only $e_1,\dots ,e_5$ and $\ell _{6,7}$ (see Example \ref{in323}). 
Hence, we obtain $\beta = 6$. 
Moreover, the union of $\beta$-times of $(-1)$-curves on $S_2$, which correspond to $e_1,\dots ,e_5$ and $\ell _{6,7}$ in $I_2$, is disjoint. 
Since $S_2$ and $S_{\kc}$ have the same type, letting $E$ be a $(-1)$-curve on $S_{\kc}$ corresponding to one of $e_1,\dots ,e_5$ or $\ell _{6,7}$ in $I_2$, we see that $|\sM _E(2,1)|=\alpha$ and there is no $(-1)$-curve $E'$ meeting $E$ on $S_{\kc}$ such that $|\sM_{E'}(2,1)| = \alpha$. 
Thus, $S$ is not quasi-minimal. 

The other cases can be shown by a similar argument, by changing the value of $\alpha$ and elements in $I_d$, which correspond to all $(-2)$-curves on $S_d$, according to the type of $S$. 
We will now explain how to do this. 
Let $\alpha$ be this as in Table \ref{table2} according to the type of $S$, and let us take a composite of blowing-ups $(\ref{sigma})$ in such a way that all $(-2)$-curves on $S_d$ corresponding to elements in $I_d$, which are these as in ``all $(-2)$-curves" in Table \ref{table2} according to the type of $S$. 
Then we see that elements in $I_2$, which correspond to all $(-1)$-curves meeting exactly $\alpha$-times of $(-2)$-curves on $S_d$, are only these as in ``$\beta$-times of $(-1)$-curves'' in Table \ref{table2} according to the type of $S$ (see Examples \ref{in323} and \ref{in323(2)} for how to find all elements in $I_d$). 
For instance, if $d=2$ and the type of $S$ is $(3A_1)_>$-type, then these elements yield $e_1,\dots ,e_5,\ell _{6,7} \in I_2$ as demonstrated above. 
Hence, $\beta$ is this as in Table \ref{table2} according to the type of $S$. 
Moreover, we see that the union of $\beta$-times of $(-1)$-curves, which meet exactly $\alpha$-times of $(-2)$-curves on $S_d$, is disjoint. 
Since $S_d$ and $S_{\kc}$ have the same type, letting $E$ be a $(-1)$-curve on $S_{\kc}$ corresponding to one of $\beta$-times of $(-1)$-curves meeting exactly $\alpha$-times of $(-2)$-curves on $S_d$, we see that $|\sM _E(2,1)|=\alpha$ and there is no $(-1)$-curve $E'$ meeting $E$ on $S_{\kc}$ such that $|\sM _{E'}(2,1)| = \alpha$. 
Thus, $S$ is not quasi-minimal. 

In summary, we show that $S$ is quasi-minimal if the type of $S$ is one of those in the list of Theorem \ref{main(1)}. 
\begin{remark}
In the above argument, we do not actually use the assumption $\rho (S)=2$. 
\end{remark}
\begin{table}[p]
\caption{The value of $\beta$ and configuration of $\beta$-times of $(-1)$-curves. }\label{table2} 
\begin{center}
 \begin{tabular}{|c|c|c|llll||c|l|}
 \hline
$d$ & Type & $\alpha$ & \multicolumn{4}{|c||}{all $(-2)$-curves} & $\beta$ & \multicolumn{1}{|c|}{$\beta$-times of $(-1)$-curves} \\ \hline \hline

$7$ & $A_1$ & $1$ & $m^0_{1,2}$ & & & & $1$ & $e_2$ \\ \hline \hline

$6$ & $A_2$ & $1$ & $m^0_{1,2}$, & \hspace{-2.5mm}$m^0_{2,3}$ & & & $2$ & $e_3$, $\ell _{1,2}$  \\ \hline
$6$ & $2A_1$ & $2$ & $m^0_{1,2}$, & \hspace{-2.5mm}$m^1_{1,2,3}$ & & & $1$ & $e_2$ \\ \hline
$6$ & $(A_1)_<$ & $1$ & $m^1_{1,2,3}$ & & & & $3$ & $e_1$, $e_2$, $e_3$ \\ \hline
$6$ & $(A_1)_>$ & $1$ & $m^0_{1,2}$ & & & & $2$ & $e_2$, $\ell _{1,3}$ \\ \hline \hline

$5$ & $A_2$ & $1$ & $m^0_{1,2}$, & \hspace{-2.5mm}$m^1_{1,3,4}$ & & & $3$ & $e_2$, $e_3$, $e_4$ \\ \hline
$5$ & $2A_1$ & $2$ & $m^0_{1,2}$, & \hspace{-2.5mm}$m^1_{1,2,3}$ & & & $1$ & $e_2$ \\ \hline
$5$ & $A_1$ & $1$ & $m^1_{1,2,3}$ & & & & $3$ & $e_1$, $e_2$, $e_3$ \\ \hline \hline

$4$ & $4A_1$ & $2$ & $m^0_{1,2}$, & \hspace{-2.5mm}$m^0_{3,4}$, & \hspace{-2.5mm}$m^1_{1,2,5}$, & \hspace{-2.5mm}$m^1_{3,4,5}$ & $4$ & $e_2$, $e_4$, $e_5$, $\ell _{1,3}$ \\ \hline
$4$ & $3A_1$ & $2$ & $m^0_{1,2}$, & \hspace{-2.5mm}$m^0_{3,4}$, & \hspace{-2.5mm}$m^1_{1,2,5}$ & & $2$ & $e_2$, $\ell _{1,3}$ \\ \hline
$4$ & $A_2$ & $1$ & $m^1_{1,2,3}$, & \hspace{-2.5mm}$m^0_{4,5}$ & & & $4$ & $e_1$, $e_2$, $e_3$, $\ell _{4,5}$ \\ \hline
$4$ & $(2A_1)_>$ & $2$ & $m^1_{1,2,3}$, & \hspace{-2.5mm}$m^1_{1,4,5}$ & & & $1$ & $e_1$ \\ \hline
$4$ & $A_1$ & $1$ & $m^1_{1,2,3}$ & & & & $4$ & $e_1$, $e_2$, $e_3$, $\ell _{4,5}$ \\ \hline \hline

\multirow{2}{*}{$3$} & \multirow{2}{*}{$3A_2$} & \multirow{2}{*}{$2$} & $m^0_{1,2}$, & \hspace{-2.5mm}$m^1_{1,3,4}$, & \hspace{-2.5mm}$m^0_{3,4}$, & \hspace{-2.5mm}$m^1_{3,5,6}$, & \multirow{2}{*}{$3$} & \multirow{2}{*}{$e_2$, $e_4$, $e_6$} \\
 & & & $m^0_{5,6}$, & \hspace{-2.5mm}$m^1_{1,2,5}$ & & & & \\ \hline
$3$ & $2A_2$ & $2$ & $m^0_{1,2}$, & \hspace{-2.5mm}$m^1_{1,5,6}$, & \hspace{-2.5mm}$m^0_{3,4}$, & \hspace{-2.5mm}$m^1_{1,2,3}$ & $1$ & $e_2$  \\ \hline
$3$ & $4A_1$ & $2$ & $m^1_{1,2,3}$, & \hspace{-2.5mm}$m^1_{1,4,5}$, & \hspace{-2.5mm}$m^1_{2,4,6}$, & \hspace{-2.5mm}$m^1_{3,5,6}$ & $6$ & $e_1$, $e_2$, $e_3$, $e_4$, $e_5$, $e_6$ \\ \hline
$3$ & $3A_1$ & $2$ & $m^1_{1,2,4}$, & \hspace{-2.5mm}$m^1_{1,3,5}$, & \hspace{-2.5mm}$m^1_{2,3,6}$ & & $3$ & $e_1$, $e_2$, $e_3$ \\ \hline
$3$ & $A_2$ & $1$ & $m^1_{1,2,3}$, & \hspace{-2.5mm}$m^1_{4,5,6}$ & & & $6$ & $e_1$, $e_2$, $e_3$, $e_4$, $e_5$, $e_6$ \\ \hline
$3$ & $2A_1$ & $2$ & $m^1_{1,2,3}$, & \hspace{-2.5mm}$m^1_{1,4,5}$ & & & $1$ & $e_1$ \\ \hline
$3$ & $A_1$ & $1$ & $m^2$ & & & & $6$ & $e_1$, $e_2$, $e_3$, $e_4$, $e_5$, $e_6$ \\ \hline \hline

\multirow{2}{*}{$2$} & \multirow{2}{*}{$3A_2$} & \multirow{2}{*}{$2$} & $m^0_{1,2}$, & \hspace{-2.5mm}$m^1_{1,3,4}$, & \hspace{-2.5mm}$m^0_{3,4}$, & \hspace{-2.5mm}$m^1_{3,5,6}$, & \multirow{2}{*}{$6$} & \multirow{2}{*}{$e_2$, $e_4$, $e_6$, $\ell _{1,7}$, $\ell _{3,7}$, $\ell _{5,7}$} \\
 & & & $m^0_{5,6}$, & \hspace{-2.5mm}$m^1_{1,2,5}$ & & & & \\ \hline
\multirow{2}{*}{$2$} & \multirow{2}{*}{$6A_1$} & \multirow{2}{*}{$3$} & $m^1_{1,2,5}$, & \hspace{-2.5mm}$m^1_{1,3,6}$, & \hspace{-2.5mm}$m^1_{1,4,7}$, & \hspace{-2.5mm}$m^1_{2,3,7}$, & \multirow{2}{*}{$4$} & \multirow{2}{*}{$e_1$, $e_2$, $e_3$, $e_4$} \\
 & & & $m^1_{2,4,6}$, & \hspace{-2.5mm}$m^1_{3,4,5}$ & & & & \\ \hline
$2$ & $5A_1$ & $3$ & \multicolumn{4}{|l||}{$m^1_{1,2,3}$, $m^1_{1,4,5}$, $m^1_{1,6,7}$, $m^1_{2,4,6}$, $m^1_{2,5,7}$} & $2$ & $e_1$, $e_2$ \\ \hline
$2$ & $2A_2$ & $2$ & $m^0_{1,2}$, & \hspace{-2.5mm}$m^1_{1,3,7}$, & \hspace{-2.5mm}$m^1_{1,2,6}$, & \hspace{-2.5mm}$m^1_{3,4,5}$ & $2$ & $e_2$, $e_3$  \\ \hline
$2$ & $(4A_1)_<$ & $3$ & $m^1_{1,2,3}$, & \hspace{-2.5mm}$m^1_{1,4,5}$, & \hspace{-2.5mm}$m^1_{1,6,7}$, & \hspace{-2.5mm}$m^1_{2,4,6}$ & $1$ & $e_1$ \\ \hline
$2$ & $(3A_1)_<$ & $3$ & $m^1_{1,2,3}$, & \hspace{-2.5mm}$m^1_{1,4,5}$, & \hspace{-2.5mm}$m^1_{1,6,7}$ & & $1$ & $e_1$ \\ \hline
$2$ & $(3A_1)_>$ & $2$ & $m^1_{1,2,3}$, & \hspace{-2.5mm}$m^1_{1,4,5}$, & \hspace{-2.5mm}$m^2_1$ & & $6$ & $e_1$, $e_2$, $e_3$, $e_4$, $e_5$, $\ell _{6,7}$ \\ \hline
$2$ & $2A_1$ & $2$ & $m^1_{1,2,3}$, & \hspace{-2.5mm}$m^2_3$ & & & $2$ & $e_1$, $e_2$ \\ \hline \hline

\multirow{2}{*}{$1$} & \multirow{2}{*}{$4A_2$} & \multirow{2}{*}{$3$} & 
 $m^1_{1,3,4}$, & \hspace{-2.5mm}$m^1_{2,5,8}$, & \hspace{-2.5mm}$m^1_{1,5,6}$, & \hspace{-2.5mm}$m^1_{2,4,7}$, & \multirow{2}{*}{$8$} & \multirow{2}{*}{$e_1$, $e_2$, $e_3$, $e_4$, $e_5$, $e_6$, $e_7$, $e_8$} \\
 & & & $m^1_{1,7,8}$, & \hspace{-2.5mm}$m^1_{2,3,6}$ &  \hspace{-2.5mm}$m^1_{3,5,7}$, &  \hspace{-2.5mm}$m^1_{4,6,8}$ & & \\ \hline
\multirow{2}{*}{$1$} & \multirow{2}{*}{$3A_2$} & \multirow{2}{*}{$3$} & $m^1_{1,3,4}$, & \hspace{-2.5mm}$m^1_{2,5,8}$, & \hspace{-2.5mm}$m^1_{1,5,6}$, & \hspace{-2.5mm}$m^1_{2,4,7}$, & \multirow{2}{*}{$2$} & \multirow{2}{*}{$e_1$, $e_2$} \\
 & & & $m^1_{1,7,8}$, & \hspace{-2.5mm}$m^1_{2,3,6}$ & & & & \\ \hline
\multirow{2}{*}{$1$} & \multirow{2}{*}{$6A_1$} & \multirow{2}{*}{$4$} & $m^1_{1,2,4}$, & \hspace{-2.5mm}$m^1_{1,3,5}$, & \hspace{-2.5mm}$m^1_{2,3,6}$, & \hspace{-2.5mm}$m^2_{1,6}$, & \multirow{2}{*}{$3$} & \multirow{2}{*}{$e_1$, $e_2$, $e_3$} \\
 & & & $m^2_{2,5}$, & \hspace{-2.5mm}$m^2_{3,4}$ & & & & \\ \hline
$1$ & $5A_1$ & $4$ & \multicolumn{4}{|l||}{$m^1_{1,3,6}$, $m^1_{1,4,5}$, $m^1_{2,3,4}$, $m^2_{3,5}$, $m^2_{4,6}$} & $1$ & $e_1$ \\ \hline
$1$ & $(4A_1)_<$ & $3$ & $m^2_{1,2}$, & \hspace{-2.5mm}$m^2_{3,4}$, & \hspace{-2.5mm}$m^2_{5,6}$, & \hspace{-2.5mm}$m^2_{7,8}$ & $8$ & $e_1$, $e_2$, $e_3$, $e_4$, $e_5$, $e_6$, $e_7$, $e_8$ \\ \hline
$1$ & $(4A_1)_>$ & $4$ & $m^1_{1,2,3}$, & \hspace{-2.5mm}$m^1_{1,4,5}$, & \hspace{-2.5mm}$m^1_{1,6,7}$, & \hspace{-2.5mm}$m^3_8$ & $1$ & $e_1$ \\ \hline
$1$ & $3A_1$ & $3$ & $m^1_{1,2,3}$, & \hspace{-2.5mm}$m^1_{1,4,5}$, & \hspace{-2.5mm}$m^1_{1,6,7}$ & & $1$ & $e_1$ \\ \hline
$1$ & $A_2$ & $2$ & $m^1_{1,2,3}$, & \hspace{-2.5mm}$m^2_{2,3}$ & & & $1$ & $e_1$ \\ \hline
\end{tabular}
 \end{center}
\end{table}
\begin{eg}\label{in323}
Assume that $S$ is of degree $d=2$ and of $(3A_1)_>$-type. Then $S_{\kc}$ contains exactly three $(-2)$-curves. 
Let us put $\alpha := 2$ and let us choose a composite of blowing-ups $(\ref{sigma})$ in such a way that $S_d=S_2$ contains exactly three $(-2)$-curves corresponding to $m^1_{1,2,3},\, m^1_{1,4,5},\, m^2_1 \in I_2$ (see Table \ref{table2}). 
Then we shall determine all elements in $I_2$ corresponding to all $(-1)$-curves meeting exactly two $(-2)$-curves on $S_{\kc}$. 
At first, we can easily check that intersection numbers $(e \cdot m_{1,2,3}^1)$, $(e\cdot m_{1,4,5}^1)$ and $(e \cdot m^2_1)$ are equal to $0$ or $1$ for any $e=e_1,\dots ,e_5$, $\ell _{6,7}$. 
Next, we put $m := m^1_{1,2,3}+m^1_{1,4,5}+m^2_1$ and determine any element $e \in I_2$ as in (\ref{(-1)}) satisfying $(e \cdot m) = \alpha (=2)$. In consideration of $m=4e_0-2(e_1+\dots +e_5)-(e_6+e_7)$, we can calculate as follows: 
\begin{itemize}
\item If $e=e_i$ $(1 \le i \le 7)$, then $(e \cdot m)=2$ if and only if $1 \le i \le 5$. 
\item If $e=\ell _{i,j}$ $(1 \le i<j \le 7)$, then $(e \cdot m)=2$ if and only if $(i,j)=(6,7)$. 
\item If $e=2e_0-(e_{i_1}+\dots +e_{i_5})$ $(1 \le i_1<\dots <i_5 \le 7)$, then $(e \cdot m)\le 8-2 \cdot 1 - 3 \cdot 2=0 < 2$. 
\item If $e=-k_1-e_i$ $(1 \le i \le 7)$, then $(e \cdot m)\le -1<2$. 
\end{itemize}
Thus, we certainly see that all elements in $I_2$, which correspond to all $(-1)$-curves meeting exactly $\alpha (=2)$-times of $(-2)$-curves on $S_{\kc}$, are exhausted by $e_1,\dots ,e_5$ and $\ell _{6,7}$. 
For the other cases with $d \ge 2$, we can calculate in a similar way. 
\end{eg}
The following deals with all cases of $d=1$: 
\begin{eg}\label{in323(2)}
Assume that $S$ is of degree $d=1$. 
We shall take a composite of blowing-ups $(\ref{sigma})$ in such a way that elements $m_1,\dots ,m_r \in I_1$ corresponding to all $(-2)$-curves on $S_1$ are as in Table \ref{table2}, according to the type of $S$. 
Then the element $m := m_1+\dots +m_r$ is expressed as follows depending on the types of $S$: 
\begin{itemize}
\item $4A_2$ or $(4A_1)_<$: $(3\alpha -1)e_0-\alpha (e_1+\dots +e_8)$,\ \text{where}\ $\alpha = 3$, $\beta =8$; 
\item $5A_1$: $(3\alpha -5)e_0-\alpha e_1-(\alpha -1)(e_2+e_3+e_4)-(\alpha -2)(e_5+\dots +e_8)$,\ \text{where}\ $\alpha = 4$, $\beta =1$; 
\item Otherwise: $3\alpha 'e_0-\alpha (e_1+\dots +e_{\beta})-\alpha '(e_{\beta +1}+\dots +e_8)$,\ \text{where} $\alpha ' < \alpha $. 
\end{itemize}
Hence, letting $e$ be one of those in the list of ({\ref{(-1)}}), if $(e \cdot m)=\alpha$, then we see that $(e \cdot e_0)=0$, i.e., $e=e_i$ for some $1 \le i \le 8$. 
Indeed, assuming $(e \cdot e_0)>0$, we have $(e \cdot m) < (e \cdot -\alpha k_1) = \alpha$ by noting $(e \cdot e_i) \ge 0$ $(1 \le i \le 8)$. 
Moreover, we see that $(e_i \cdot m) =\alpha$ if and only if $1 \le i \le \beta$. Obviously we obtain $(e_i \cdot m_j) \ge 0$ for $1 \le i \le \beta$ and $1 \le j \le r$. 
\end{eg}
\section{Proof of Theorem \ref{main(2)}}\label{4}
In this section, we shall prove Theorem \ref{main(2)}. 
Let $S$ be a minimal weak del Pezzo surface of degree $d$ and $\rho (S)=2$ over a field $k$ of characteristic zero. 
By Theorem \ref{main(1)}, we see that Theorem \ref{main(2)} is a consequence of the following proposition: 
\begin{prop}\label{prop}
With the notation as above, the following three assertions hold true: 
\begin{enumerate}
\item If $d=8$, then $S$ contains an $\bA ^1_k$-cylinder if and only if there exists a conic bundle $\pi :S \to B$, which  admits a section defined over $k$. 
\item If $d=8$, then $S$ contains the affine plane $\bA ^2_k$ if and only if $S(k) \not= \emptyset$. 
\item If $d<8$, then $S$ does not contain any $\bA ^1_k$-cylinder. 
\end{enumerate}
\end{prop}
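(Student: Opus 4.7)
The plan is to prove the three parts in turn, with $d = 8$ (parts (1)--(2)) handled by explicit analysis of $k$-forms of $\bP ^1 \times \bP ^1$ and $\bF _2$, and the $d < 8$ case (part (3)) proved by contradiction via a variant of Corti's inequality.

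\emph{Parts (1) and (2), case $d=8$.} For the ``if'' direction of (1), Lemma \ref{bdl}(2) upgrades the Mori conic bundle $\pi : S \to B$ to a $\bP ^1_k$-bundle, so that $W := S \setminus \Gamma$ is a line bundle on $B$. Choosing a rational section of this line bundle defined over $k$ and removing its divisor $D$ leaves an affine open $V \subset B$ over which $W$ trivializes, whence $W|_V \cong V \times \bA ^1_k$ is an $\bA ^1_k$-cylinder in $S$. For the ``only if'' direction, an $\bA ^1_k$-cylinder $U \cong Z \times \bA ^1_k \subset S$ provides an $\bA ^1$-fibration $U \to Z$ whose general fibers have nef class of self-intersection zero on $S$; this class is semiample and produces a $\bP ^1$-fibration $\pi : S \to B$ defined over $k$, which must be a Mori conic bundle since the only $\bP ^1$-fibration classes on $S_{\kc}$ are the rulings (all $\Gal$-invariant by $\rho (S)=2$). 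The closure in $S$ of the ``curve at infinity'' $Z \times \{ \infty \} \subset Z \times \bP ^1$ is then a section of $\pi$ defined over $k$. For (2), if $S(k) \not= \emptyset$ and $S_{\kc} \cong \bP ^1_{\kc} \times \bP ^1_{\kc}$, both rulings are $\Gal$-invariant, so the natural morphism $(\pi _1,\pi _2): S \to B_1 \times _k B_2$ is an isomorphism and each $B_i$ is a $k$-form of $\bP ^1$; the rational point projects to $B_i(k) \not= \emptyset$, whence $B_i \cong \bP ^1_k$, $S \cong \bP ^1_k \times _k \bP ^1_k$, and removing the two fibers through the $k$-point produces $\bA ^2_k \subset S$. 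When $S_{\kc} \cong \bF _2$, the unique $(-2)$-section is defined over $k$, $B \cong \bP ^1_k$ (as $\pi$ sends a rational point to $B$), and removing this section together with a $k$-rational fiber yields $\bA ^2_k$ via trivialisation of $\sO _{\bP ^1}(2)$ on $\bA ^1_k$. The converses (right to left) are clear.

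\emph{Part (3), case $d<8$.} By Theorem \ref{main(1)} we may assume $d \in \{1, 2, 4\}$. Suppose for contradiction that $S$ contains an $\bA ^1_k$-cylinder $U \cong Z \times \bA ^1_k$. By Proposition \ref{conic bdl} and Lemma \ref{bdl}(3), the Mori conic bundle $\pi : S \to B$ on $S$ admits no section defined over $k$. The closures of the $\bA ^1$-fibers of $U$ realise a rational pencil on $S_{\kc}$ whose fiber class is either that of $\pi$ or is distinct from it. In the former case the closure of a section of $U \to Z$ defines a $k$-section of $\pi$, contradicting Lemma \ref{bdl}(3); in the latter case the pencil corresponds to a mobile divisor class $F'$ on $S_{\kc}$, and one applies a variant of Corti's inequality \cite[Theorem 3.1]{C00} to the log pair $(S, \lambda D)$ with $D := S \setminus U$ and $\lambda$ chosen so that $K_S + \lambda D$ has the correct numerical properties, deriving an inequality of the form $\mathrm{mult}_p (D)^2 \le c \cdot (-K_S^2)$ at base points $p$ of $|F'|$ that becomes numerically inconsistent with $d \le 4$ and the intersection pattern forced by the $\Gal$-orbits of $(-2)$-curves predicted by the minimal types in Theorem \ref{main(1)}.

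The principal obstacle will be the Corti-inequality step: one must extract a suitable mobile linear system from the cylinder, track multiplicities at base points lying on $\Gal$-orbits of $(-2)$-curves on $S_{\kc}$, and descend the resulting inequalities from $\kc$ to $k$. The tightest cases will be $d = 4$ of type $2A_1(1)$ and $d = 2$ of type $4A_1(2)$, where $S_{\kc}$ has the richest configuration of negative curves and $\pi$ has several singular fibers; here careful bookkeeping of Galois orbits will be required to close the argument.
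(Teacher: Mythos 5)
Your treatment of the $d=8$ direction ``(b)$\Rightarrow$(a)'' and of part (2) is essentially the paper's argument, but there are two genuine gaps. First, in the ``only if'' direction of (1) you assert that the closures of the $\bA^1$-fibers of $U\simeq Z\times\bA^1_k$ ``have nef class of self-intersection zero,'' i.e.\ that the induced pencil is base point free. This is false in general: the closures can all pass through a common point of the boundary $S\setminus U$ (already $\bA^2\subset\bP^2$ shows this, and the same happens on forms of $\bP^1\times\bP^1$), in which case the class has positive self-intersection and there is no induced $\bP^1$-fibration. The paper's proof of this implication is precisely about this case: if the pencil $\sL$ has a base point, the one-place-at-infinity property of the $\bA^1$-fibers forces $\Bs(\sL_{\kc})$ to be a single point, necessarily $\Gal$-invariant, hence a $k$-rational point of $S$; one then concludes via the implication (c)$\Rightarrow$(b) (Lemma \ref{lem4-1(1)}). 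Without this branch your argument does not establish (1).

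Second, part (3) — the heart of the proposition — is not actually proved in your proposal; you defer the Corti step and the mechanism you sketch is not the one that works. Applying a Corti-type inequality to the pair $(S,\lambda D)$ with $D=S\setminus U$ and aiming at an estimate $\mult_p(D)^2\le c\,(-K_S^2)$ is not the paper's argument, and your expectation of a type-by-type analysis (e.g.\ singling out $2A_1(1)$ and $4A_1(2)$) is unnecessary and not how the contradiction arises. The actual proof runs: (i) show (as above) that $\Bs(\sL)$ is a single $k$-rational point $p$, so that $\Pic(S)_{\bQ}=\bQ[-K_S]\oplus\bQ[F]$ and $\sL\sim_{\bQ}a(-K_S)+bF$, where $F$ is the fiber of the Mori conic bundle through $p$; (ii) prove the key lemma that $b<0$: assuming $b\ge 0$, resolving the base point shows $(S,-\tfrac{b}{a}F+\tfrac1a\sL)$ is not log canonical at $p$, and Corti's inequality applied to the \emph{mobile} system $\sL$ (with separate constants according to whether $F$ is smooth or is the union of two $(-1)$-curves crossing at $p$) contradicts $(\sL^2)=a(da+4b)\le 4a(a+b)$, using $d\le 4$; (iii) derive $b\ge 0$ in another way: if $-K_S$ is not ample, intersect $\sL$ with the $\Gal$-orbit $M$ of a $(-2)$-curve ($(M\cdot -K_S)=0$, $(M\cdot F)>0$); if $-K_S$ is ample, use the second Mori conic bundle of Lemma \ref{MCBs} with $F_2\sim\frac4d(-K_S)-F$ and apply the lemma again to the rewriting $\sL\sim_{\bQ}(a+\tfrac4d b)(-K_S)-bF_2$. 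None of these ingredients appears in your outline, so the contradiction is not actually reached. A further small point: for the case $-K_S$ ample the restriction $d\in\{1,2,4\}$ comes from Appendix \ref{5} (minimal del Pezzo surfaces of Picard number two), not from Theorem \ref{main(1)}, which only covers the non-ample case.
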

We will prove Proposition \ref{prop} according to the degree $d$ of $S$. 
More precisely, Proposition \ref{prop}(1) and (2) will be shown in \S \S \ref{4-1} and Proposition \ref{prop}(3) will be shown in \S \S \ref{4-2}. 
\subsection{Case of degree $8$}\label{4-1}
In this subsection, we shall show Proposition \ref{prop}(1) and (2). 
Let us assume $d=8$. 
Then $S$ is a $k$-form of $\bP ^1_{\kc} \times \bP ^1_{\kc}$ or the Hirzebruch surface $\bF _2$ of degree two, i.e., $S_{\kc} \simeq \bP ^1_{\kc} \times \bP ^1_{\kc}$ or $S_{\kc} \simeq \bF _2$. 
Moreover, $S$ is endowed with a structure of Mori conic bundle $\pi :S \to B$ such that the base extension of $\pi$ to the algebraic closure $\pi_{\kc} : S_{\kc} \to B_{\kc}$ is a $\bP^1$-bundle over $B_{\kc} \simeq \bP_{\kc}^1$ by Lemma \ref{bdl}. 

We shall consider the following three conditions: 
\begin{enumerate}
\item[{\rm (a)}] $S$ contains an $\bA ^1_k$-cylinder. 
\item[{\rm (b)}] There exists a Mori conic bundle $\pi : S \to B$, which admits a section defined over $k$. 
\item[{\rm (c)}] $S(k) \not= \emptyset$. 
\end{enumerate}
Then the following three lemmas hold: 
\begin{lemma}\label{lem4-1(1)}
{\rm (c)} implies {\rm (b)}. 
\end{lemma}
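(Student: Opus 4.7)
The plan is to split into the two geometric cases $S_{\kc} \simeq \bP^1_{\kc} \times \bP^1_{\kc}$ and $S_{\kc} \simeq \bF_2$ and verify condition (b) in each, either unconditionally or using the hypothesis $S(k) \ne \emptyset$.

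First I would handle the Hirzebruch case $S_{\kc} \simeq \bF_2$. The negative section of $\bF_2$ is the unique $(-2)$-curve on $S_{\kc}$, so it is $\Gal$-invariant and therefore descends to a closed subscheme $\Sigma \subset S$ defined over $k$. Since $\Sigma_{\kc}$ maps isomorphically onto $B_{\kc}$ under $\pi_{\kc}$, the restriction $\pi|_{\Sigma}: \Sigma \to B$ is an isomorphism over $k$, yielding a section of $\pi$ defined over $k$. Hence (b) holds unconditionally in this subcase, so a fortiori (c) implies (b).

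Next I would treat the case $S_{\kc} \simeq \bP^1_{\kc} \times \bP^1_{\kc}$. Here $\Pic(S_{\kc}) = \bZ [F_1] \oplus \bZ [F_2]$, where $[F_1], [F_2]$ are the two ruling classes. The Galois group $\Gal$ either stabilizes these two classes individually or exchanges them; in the latter case the invariant sublattice would have rank one, contradicting the hypothesis $\rho (S) = 2$. Hence each class is $\Gal$-invariant, and both rulings descend to $k$-morphisms $\pi_i : S \to B_i$ onto smooth geometrically rational curves $B_i$ for $i = 1, 2$. Each $\pi_i$ is a $\bP^1_{\kc}$-bundle after base change and therefore a Mori conic bundle in the sense of Definition above. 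Now given $p \in S(k)$, set $b := \pi_1(p) \in B_1(k)$; the existence of a $k$-point on $B_1$ implies $B_1 \simeq \bP^1_k$. The scheme-theoretic fiber $F := \pi_1^{-1}(b) \subset S$ is defined over $k$, geometrically isomorphic to $\bP^1_{\kc}$ and contains the $k$-point $p$, so $F \simeq \bP^1_k$. The restriction $\pi_2|_F : F \to B_2$ is an isomorphism after base change to $\kc$ (a fiber of one ruling on $\bP^1 \times \bP^1$ is a section of the other), hence an isomorphism over $k$; its inverse is a section of $\pi_2$ defined over $k$, establishing (b).

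The main obstacle is the bookkeeping in the $\bP^1 \times \bP^1$ case: one must exploit $\rho (S) = 2$ through the $\Gal$-action on $\Pic (S_{\kc})$ to ensure that both rulings descend to $k$, and then verify that the fiber through the $k$-point is genuinely $\bP^1_k$ as a $k$-scheme (so that the standard fact ``fibers of one ruling are sections of the other'' actually transports to the level of $k$-schemes). The rest of the argument is essentially formal.
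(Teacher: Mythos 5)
Your proof is correct, and it takes a mildly but genuinely different route from the paper. The paper's proof uses $S(k)\neq\emptyset$ together with $\rho(S)=2$ to conclude that the $k$-form is in fact trivial, i.e.\ $S\simeq \bP^1_k\times\bP^1_k$ or $S\simeq \bP(\sO_{\bP^1_k}\oplus\sO_{\bP^1_k}(2))$, and then simply reads off a $\bP^1$-bundle with a section over $k$; the triviality-of-the-form step is asserted rather than spelled out. You never trivialize the form: in the $\bF_2$-case you observe that the unique $(-2)$-curve is $\Gal$-invariant, hence defined over $k$, and is a section of $\pi$ --- note this proves (b) with no rational point at all, which is exactly the argument the paper uses later for its corollary that $k$-forms of $\bF_2$ always contain an $\bA^1_k$-cylinder; in the $\bP^1_{\kc}\times\bP^1_{\kc}$-case you use $\rho(S)=2$ (via the injection $\Pic(S)\hookrightarrow\Pic(S_{\kc})^{\mathrm{Gal}(\kc/k)}$, Hilbert 90) to see that both ruling classes are Galois-fixed, descend both rulings to $k$, and then take the fiber of one ruling through the $k$-rational point as a section of the other. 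What the paper's approach buys is a stronger intermediate statement (an identification of $S$ over $k$) in one line; what yours buys is a more self-contained and elementary argument that isolates exactly where the $k$-point is used and where it is not. The only steps you state without full proof --- descent of each ruling from the Galois-invariance of its class (equivalently, of the corresponding extremal ray), and faithfully flat descent of the isomorphism $\pi_2|_F:F\to B_2$ --- are standard and adequately flagged, so I see no gap.
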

\begin{proof}
Noting $S(k) \not= \emptyset$ and $\rho _k(S)=2$, we see that $S \simeq \bP ^1_k \times \bP ^1_k$ or $S$ is the Hirzebruch surface of degree two defined over $k$ (i.e., $S \simeq \bP (\sO _{\bP ^1_k} \oplus \sO _{\bP ^1_k}(2))$) by using {\cite[Proposition 4.5.10]{P17}}. 
In particular, there exists a $\bP ^1$-bundle $S \to \bP ^1_k$ over $k$, which admits a section defined over $k$. 
\end{proof}
\begin{lemma}\label{lem4-1(2)}
{\rm (a)} implies {\rm (b)}. 
\end{lemma}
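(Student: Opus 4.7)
The plan is to produce, from an $\bA^1_k$-cylinder $U\cong Z\times _k\bA^1_k$ in $S$, a Mori conic bundle on $S$ defined over $k$ admitting a $k$-rational section. I split the argument according to whether $S_{\kc}\cong \bF_2$ or $S_{\kc}\cong \bP^1_{\kc}\times \bP^1_{\kc}$.

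When $S_{\kc}\cong \bF_2$, the unique $(-2)$-section of $\bF_2$ is $\Gal$-invariant and hence descends to a curve $E\subset S$ defined over $k$; this $E$ is already a section of the unique Mori conic bundle $\pi :S\to B$ on $S$, so (b) follows without using the cylinder at all.

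When $S_{\kc}\cong \bP^1_{\kc}\times \bP^1_{\kc}$, the assumption $\rho _k(S)=2$ forces both rulings to be $\Gal$-invariant, yielding two Mori conic bundles $\pi _i:S\to B_i$ over $k$ ($i=1,2$). I would use the cylinder to produce a $k$-rational section of one of them. Define $\Sigma \subset S$ to be the Zariski closure of the geometrically irreducible closed subscheme $Z\times \{ 0\}\subset U$. By construction $\Sigma$ is defined over $k$ and its normalization is the smooth projective model $\overline Z$ of $Z$, a Brauer--Severi curve over $k$; in particular $\Sigma _{\kc}$ is an irreducible rational curve. Writing $[\Sigma _{\kc}]=\alpha H_1+\beta H_2$ in $\Pic (S_{\kc})\simeq \bZ H_1\oplus \bZ H_2$ with $H_i$ the fiber class of the $i$-th ruling, the fact that $\Sigma _{\kc}$ dominates $\overline Z _{\kc}$ through the cylinder projection excludes $\alpha =0$ and $\beta =0$. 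The geometric rationality of $\Sigma _{\kc}$, combined with the rigidity of closed $\bA^1$-embeddings into the smooth affine surface $U_{\kc}$ (an affine open of a Hirzebruch surface), then forces $\alpha =1$ or $\beta =1$. Hence $\Sigma _{\kc}$ is a geometric section of one of the rulings, and $\Sigma$ itself is a $k$-rational section of the corresponding $\pi _i:S\to B_i$: indeed, the restriction $\pi _i|_{\Sigma }:\Sigma \to B_i$ is a $k$-morphism of projective curves which is of degree $1$ on $\kc$-points, and the normalization argument identifies $\Sigma$ with $B_i$ via $\pi _i|_{\Sigma }$, whose inverse is the desired section.

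The main obstacle is the last numerical step, namely ruling out rational (necessarily singular) curves of class $(\alpha ,\beta )$ with $\alpha ,\beta \ge 2$ as possible closures $\Sigma _{\kc}$. The key is that the closed embedding $Z_{\kc}\times \{ 0\}\hookrightarrow U_{\kc}$ as an $\bA^1$-subvariety of an affine open of $\bP^1_{\kc}\times \bP^1_{\kc}$ is constrained, in the spirit of the Abhyankar--Moh rigidity for closed $\bA^1$-embeddings into $\bA^2$: after a surface automorphism the image becomes a coordinate axis whose projective closure is a ruling fiber or section, which transports back to give the desired restriction on $[\Sigma _{\kc}]$. Verifying this rigidity precisely in the setting of an arbitrary cylinder $U_{\kc}\subset S_{\kc}$ is the technical heart of the argument.
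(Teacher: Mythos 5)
Your $\bF_2$ case is fine (the unique $(-2)$-section is $\Gal$-invariant, descends, and is a section of the conic bundle; this is exactly the paper's corollary following this lemma). The gap is in the $\bP^1_{\kc}\times\bP^1_{\kc}$ case, and it is precisely the step you flag as the "technical heart": the claim that the closure $\Sigma$ of $Z\times\{0\}$ has class $\alpha H_1+\beta H_2$ with $\alpha=1$ or $\beta=1$ is false, not merely unproven. The curve $Z\times\{0\}$ depends on the chosen trivialization of the cylinder and can be moved by automorphisms of $U_{\kc}$ that do not extend to $S_{\kc}$. Concretely, take $U=\bA^2\subset\bP^1_{\kc}\times\bP^1_{\kc}$ (the complement of one fiber of each ruling) and let $\phi$ be the automorphism of $\bA^2$ obtained by composing the shears $(x,y)\mapsto(x,y+x^2)$ and $(x,y)\mapsto(x+y^2,y)$; declaring $(z,t)\mapsto\phi(z,t)$ to be the cylinder isomorphism $Z\times\bA^1\simeq U$ (with $Z=\bA^1$), the curve $Z\times\{0\}$ becomes $\{(z+z^4,\,z^2)\}$, whose closure meets a general fiber of one ruling in $2$ points and of the other in $4$ points, i.e.\ has class $2H_1+4H_2$. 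The Abhyankar--Moh rectification you invoke is an intrinsic statement about the affine surface: the rectifying automorphism does not extend to $\bP^1_{\kc}\times\bP^1_{\kc}$, so "transporting back" does not control $[\Sigma_{\kc}]$, which is a compactification-dependent quantity. (A smaller point: $U$ is affine only after shrinking $Z$ to an affine open, which you may do, but should say.)

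The paper avoids this entirely by working not with a horizontal section of the cylinder but with the family of closures of the $\bA^1$-fibers $\{z\}\times\bA^1$, which forms a linear system $\sL$ on $S$ defined over $k$. If $\sL$ is base-point free, the associated morphism is a Mori conic bundle and the boundary $S\setminus U$ contains a section defined over $k$, giving (b). If not, then since each fiber is an affine line with a single place at infinity, $\Bs(\sL_{\kc})$ is a single point, necessarily fixed by $\Gal$, hence a $k$-rational point of $S$; then (c) holds and one concludes by Lemma \ref{lem4-1(1)}, i.e.\ (c) $\Rightarrow$ (b). If you want to salvage your approach, you would need to replace the curve $Z\times\{0\}$ by an object canonically attached to the cylinder (such as this pencil of fiber closures), rather than one depending on the trivialization.
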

\begin{proof}
Suppose that $S$ contains an $\bA ^1_k$-cylinder, say $U \simeq \bA ^1_k \times Z$, and there is no Mori conic bundle, which admits a section defined over $k$. 
The closures in $S$ of fibers of the projection $pr_Z:U \simeq \bA ^1_k \times Z \to Z$ yields a linear system, say $\fl$, on $S$.  
Hence, we obtain the rational map $\Phi _{\fl}: S \dashrightarrow \overline{Z}$ associated with $\fl$, where $\overline{Z}$ is the smooth projective model of $Z$. 
If $\Phi _{\fl}$ is a morphism, then $\Phi _{\fl}$ is a Mori conic bundle, which admits a section defined over $k$ and is contained in $S \backslash U$, by Lemma \ref{MCB}. 
It is a contradiction to the assumption. 
Hence, $\fl$ is not base point-free.  
Then the base extension of $\fl$, say $\fl _{\kc}$, is not also base point-free. 
Since fibers of the base extension $pr_{Z_{\kc}}: U_{\kc} \simeq \bA ^1_{\kc} \times Z_{\kc} \to Z_{\kc}$ are isomorphic to the affine line, in particular, having only one-place at infinity, $\Bs (\fl _{\kc})$ is composed of one point. 
Furthermore, this point is defined over $k$. 
Thus, $\Bs (\fl )$ consists of only one $k$-rational point, which contradicts Lemma \ref{lem4-1(1)}. 
\end{proof}
\begin{lemma}\label{lem4-1(3)}
{\rm (b)} implies {\rm (a)}. 
\end{lemma}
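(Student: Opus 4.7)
The plan is to construct an $\bA^1_k$-cylinder in $S$ explicitly by removing from $S$ both the $k$-rational section $\Sigma$ supplied by (b) and a single $k$-rational fiber of $\pi$.

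First, because $\Sigma$ is a section defined over $k$, its image under $\pi$ covers $B$, so $B$ carries $k$-rational points. Combined with $B_{\kc} \cong \bP^1_{\kc}$ from Lemma \ref{bdl}(1), this forces $B \cong \bP^1_k$, so we may fix a $k$-rational point $b_0 \in B$ with fiber $F_{b_0} := \pi^{-1}(b_0)$. Next, Lemma \ref{bdl}(2) tells us that $\pi_{\kc}$ is a $\bP^1_{\kc}$-bundle, so $\pi$ is a smooth proper morphism all of whose geometric fibers are $\bP^1$; together with the $k$-rational section $\Sigma$ this forces $\pi$ itself to be a Zariski-locally trivial $\bP^1$-bundle, and hence $S \cong \bP(E)$ for some rank-$2$ vector bundle $E$ on $\bP^1_k$.

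The section $\Sigma$ then corresponds to a surjection $E \twoheadrightarrow L$ with kernel a line bundle $L'$, and $S \setminus \Sigma$ is naturally the total space of the affine line bundle associated to $L' \otimes L^{-1}$ over $B$. Restricting to the affine open $B \setminus \{b_0\} \cong \bA^1_k$ trivializes this line bundle, since $\Pic(\bA^1_k) = 0$, which yields
\begin{align*}
U := S \setminus (\Sigma \cup F_{b_0}) \cong \bA^1_k \times \bA^1_k.
\end{align*}
Thus $U$ is visibly an $\bA^1_k$-cylinder inside $S$, establishing (a).

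The only step requiring genuine argument is the passage from ``$\pi_{\kc}$ is a $\bP^1_{\kc}$-bundle admitting a $k$-rational section'' to ``$\pi$ is a Zariski-locally trivial $\bP^1$-bundle''. One can verify this by pushing $\sO_S(\Sigma)$ forward to obtain a rank-$2$ locally free sheaf $\pi_\ast \sO_S(\Sigma)$ on $B$ and checking that the natural morphism $S \to \bP(\pi_\ast \sO_S(\Sigma))$ is an isomorphism, or equivalently by noting that the Brauer--Severi class of a smooth conic bundle is killed as soon as a rational section exists. Everything after that is formal, relying only on the vanishing of $\Pic(\bA^1_k)$.
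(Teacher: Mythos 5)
There is a genuine gap at the very first step, and it propagates to the conclusion. You argue that ``because $\Sigma$ is a section defined over $k$, its image under $\pi$ covers $B$, so $B$ carries $k$-rational points.'' This is false: a section defined over $k$ is a $k$-curve mapped isomorphically onto $B$ by $\pi$, so it has exactly as many $k$-points as $B$ does --- possibly none. Surjectivity of a $k$-morphism does not produce $k$-rational points on the target. The paper's own example makes this concrete: take $C \subseteq \bP^2_{\bR}$ the conic $x^2+y^2+z^2=0$ with $C(\bR)=\emptyset$ and $S = C \times \bP^1_{\bR}$ with $\pi = pr_1 : S \to B = C$; the curve $C \times \{pt\}$ (with $pt \in \bP^1(\bR)$) is a section defined over $\bR$, yet $B(\bR)=\emptyset$. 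In this situation your construction cannot even begin (there is no $k$-rational point $b_0$, no $k$-rational fiber $F_{b_0}$, and $B \setminus \{b_0\}$ is not $\bA^1_k$), and your final conclusion $U \cong \bA^1_k \times \bA^1_k = \bA^2_k$ would in fact be contradictory: containing $\bA^2_k$ forces $S(k)\neq\emptyset$, whereas in the example $S(\bR)=\emptyset$. So your argument proves strictly more than the lemma claims, and that stronger statement is false; this is exactly why condition (b) is weaker than condition (c) in the paper.

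The remainder of your strategy is sound and is essentially the paper's argument, once the reliance on a $k$-rational point of $B$ is removed. The correct version: since $\pi_{\kc}$ is a $\bP^1_{\kc}$-bundle and $\pi$ has a section $\Sigma$ over $k$, remove from $S$ the divisor consisting of $\Sigma$ together with the pull-back under $\pi$ of a $\Gal$-orbit of closed points of $B_{\kc}$ (equivalently, a closed point of $B$, not necessarily of degree one). The complement $U'$ fibers over an affine curve $Z' \subseteq B$ with all geometric fibers $\bA^1$; after base change to $\kc$ this is an $\bA^1$-bundle, hence by Kambayashi--Miyanishi \cite[Theorem 1]{KM78} so is $\pi|_{U'}$ over $k$, and shrinking $Z'$ to a suitable open subset $Z$ gives $\pi^{-1}(Z) \cap U' \simeq \bA^1_k \times Z$. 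Note that one only obtains an $\bA^1_k$-cylinder over some affine curve $Z$; there is no claim (and in general no truth) that $Z \cong \bA^1_k$ or that the cylinder is the affine plane.
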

\begin{proof}
By the assumption, we can take a Mori conic bundle $\pi :S \to B$, which admits a section defined over $k$, and let $\Gamma$ be a section of $\pi$ defined over $k$. 
As $\pi$ itself is defined over $k$, the base curve $B_{\kc}$ is also equipped with an action of $\Gal$ induced from that on $S_{\kc}$. 
The complement, say $U'$, of a divisor composed of $\Gamma$ and the pull-back by $\pi_{\kc}$ of a $\Gal$-orbit on $B_{\kc}$ is then a smooth affine surface defined over $k$. 
The restriction $\varphi := \pi|_{U'}$ of $\pi$ to $U'$ yields a morphism over an affine curve $Z' \subseteq B$. 
By construction, the base extension $\varphi _{\kc}$ is an $\bA ^1$-bundle to conclude that so is $\varphi$ by \cite[Theorem 1]{KM78}, which implies that there exists an open subset $Z \subseteq Z'$ such that $\varphi^{-1}(Z) \simeq \bA ^1_k \times Z$. 
This completes the proof. 
\end{proof}
Proposition \ref{prop}(1) follows from Lemmas \ref{lem4-1(2)} and \ref{lem4-1(3)}. 
\begin{cor}\label{cor4-1}
Let the notation be the same as above. 
If $-K_S$ is not ample, i.e., $S$ is a $k$-form of the Hirzebruch surface $\bF _2$ of degree two, then $S$ always contains an $\bA ^1_k$-cylinder. 
\end{cor}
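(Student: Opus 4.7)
The plan is to reduce everything to Proposition \ref{prop}(1): it suffices to exhibit a Mori conic bundle $\pi:S\to B$ defined over $k$ that admits a section defined over $k$. Since $S$ is a minimal weak del Pezzo surface over $k$ with $\rho(S)=2$, Proposition \ref{conic bdl} already supplies a Mori conic bundle $\pi:S\to B$ over $k$, and by Lemma \ref{bdl}(2) its base extension $\pi_{\kc}:S_{\kc}\to B_{\kc}\simeq\bP^1_{\kc}$ is a $\bP^1$-bundle (this is simply the ruling of $\bF_2$).

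Next I would identify the desired section. Since $-K_S$ is not ample and $S_{\kc}\simeq\bF_2$, the surface $S_{\kc}$ contains exactly one $(-2)$-curve, namely the negative section $C_0$ of the Hirzebruch surface $\bF_2$. Being the \emph{unique} $(-2)$-curve, $C_0$ is necessarily stable under the $\Gal$-action, so it descends to a curve $C\subset S$ defined over $k$. Since $C_0$ is a section of the ruling $\pi_{\kc}$, the curve $C$ is a section of $\pi$ defined over $k$.

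With such a section in hand, Proposition \ref{prop}(1) (concretely, the implication Lemma \ref{lem4-1(3)}) produces an $\bA^1_k$-cylinder in $S$ by removing from $S$ the section $C$ together with the $\pi$-preimage of a suitable $\Gal$-orbit of closed points on $B_{\kc}$, and invoking \cite[Theorem 1]{KM78} to trivialize the resulting $\bA^1$-bundle over a sufficiently small open subset of the base.

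There is essentially no obstacle: the only point to be careful about is the \textbf{Galois-descent} of the negative section, but this is immediate from the fact that on $\bF_2$ the negative section is the unique irreducible curve of self-intersection $-2$. All the other ingredients (the existence of the Mori conic bundle over $k$, and the passage from a $k$-rational section to an $\bA^1_k$-cylinder) are already provided by results proved earlier in the paper.
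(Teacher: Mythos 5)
Your proposal is correct and follows essentially the same route as the paper: both identify the unique negative section of $S_{\kc}\simeq\bF_2$ (the one $(-2)$-curve), note that its uniqueness forces Galois-stability and hence descent to a section of $\pi$ over $k$, and then conclude via Proposition \ref{prop}(1) (i.e., Lemma \ref{lem4-1(3)}).
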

\begin{proof}
By the assumption, $S_{\kc} \simeq \bF _2$ contains exactly one minimal section $M$, in particular, $M$ is defined over $k$. 
On the other hand, the base extension of the Mori conic bundle $\pi :S \to B$ to the algebraic closure $\pi _{\kc} : S_{\kc} \simeq \bF _2 \to B _{\kc}$ is a $\bP ^1$-bundle over $B_{\kc} \simeq \bP ^1_{\kc}$, whose $M$ is a section over $\kc$. 
Hence, $M$ is a section of $\pi$. 
This completes the proof by Proposition \ref{prop}(1). 
\end{proof}
Next, we will show Proposition \ref{prop}(2) as follows: 
\begin{proof}[Proof of Proposition \ref{prop}(2)]
Assume that $S$ admits a $k$-rational point. 
Let $\pi :S \to B$ be a Mori conic bundle. 
Then the base $B$ is a geometrically rational curve admitting a $k$-rational point to conclude that $B$ is isomorphic to $\bP_k^1$. Thus, $S$ contains the affine plane $\bA _k^2$. 
The converse direction is obvious. 
\end{proof}
\begin{eg}\label{not rational}
Take a smooth conic without ${\bR}$-rational points: 
\begin{align*}
C := \left( \, x^2 + y^2 + z^2 = 0 \, \right) \, \subseteq \, {\bP}_{\bR}^2 ={\rm Proj} (\bR [x,y,z]) 
\end{align*}
and let us put $S := C \times C$. 
Then $S$ is a $\bR$-form of $\bP ^1_{\bC} \times \bP ^1_{\bC}$ such that $S(\bR ) = \emptyset$. 
Hence, $S$ does not contain the affine plane $\bA ^2_{\bR}$ by Proposition \ref{prop}(2). 
But on the other hand, $S$ contains an $\bA ^1_{\bR}$-cylinder.
This can be shown as follows. 
Let $\varphi :S' \to S$ be the blow-up at a pair of conjugate points $x$ and $\bar{x}$. 
Then there exists a contraction $\psi :S' \to C \times \bP ^1_{\bR}$ of a disjoint union of two $(-1)$-curves defined over $\bR$ (see {\cite[Lemma 3.2]{K}} or {\cite{R02}}). 
Namely, we can take a fiber $F$ defined over $\bR$ of the second projection $pr _2: C_{\bC} \times \bP ^1_{\bC} \to \bP ^1_{\bC}$ such that $\Gamma := (\varphi \circ \psi^{-1})_{\ast}(F)$ is an irreducible curve on $S$ passing through $x$ and $\bar{x}$ defined over $\bR$. 
In fact, $\Gamma$ is a section defined over $\bR$ of the first and second projections $S \simeq C \times C \to C$, which are Mori conic bundles, respectively. 
Thus, the assertion follows from Proposition \ref{prop}(1). 

Incidentally, $C \times \bP ^1_{\bR}$ clearly contains an $\bA ^1_{\bR}$-cylinder. 
Hence, by Corollary \ref{cor4-1} combined with the classification of $\bR$-forms of $\bP ^1_{\bC} \times \bP ^1_{\bC}$ ({\cite[Lemma 1.16]{K}} or {\cite[Proposition 1.2]{R02}}), we know that  any minimal weak del Pezzo surface of degree $8$ defined over $\bR$ always contains an $\bA ^1_{\bR}$-cylinder. 
\end{eg}
\subsection{Case of degree less than $8$}\label{4-2}
In this subsection, let us assume $d<8$. 
By Lemma \ref{bdl}, $S$ is endowed with a structure of Mori conic bundle $\pi :S \to B$ such that $\pi _{\kc}$ admits a singular fiber. 
Notice that $B$ is isomorphic to $\bP_k^1$ provided that $S$ admits a $k$-rational point. 
The purpose of this subsection is to prove Proposition \ref{prop}(3). 
In other words, we shall show that $S$ does not contain any $\bA _k^1$-cylinder. 

The following lemma plays an important role in what follows and is the key lemma for the proof of Theorem \ref{main(2)}. 
Noting that we need to treat a minimal weak del Pezzo surface $S$ with $\rho (S)=2$, this lemma will be proved by the argument of {\cite[Proposition 9]{DK18}}, which deals with del Pezzo surfaces of Picard rank one, combined with {\it the variant of Corti's inequality} (see \cite[Theorem 3.1]{C00}): 
\begin{lemma}\label{Corti lem}
With the notation as above, let  $\fl$ be a linear pencil on $S$ such that $\Bs (\fl )$ consists of only one $k$-rational point, say $x$. 
Assume that a general member $L$ of $\fl$ satisfies $L \backslash \{ x\} \simeq \bA ^1_k$ and is $\bQ$-linearly equivalent to $a(-K_S) +  bF$ for some $a,b \in \bQ$, where $F$ is the fiber of the conic bundle $\pi :S \to \bP ^1_k$ passing through $x$. 
Then $b$ must be negative. 
\end{lemma}
\begin{proof}
Suppose $b \ge 0$. 
Note that $a$ must be positive by $0 \le (\fl \cdot F) = 2a$ and $0 < (\fl ^2) = a(da+4b)$. 
Let $\Phi _{\fl}:S \dashrightarrow \bP ^1_k$ be the rational map associate to $\fl$, and let $\psi:\widetilde{S} \to S$ be the shortest succession of blow-ups $x \in \Bs (\fl )$ and its infinitely near points such that the strict transform $\widetilde{\fl} := \psi ^{-1}_{\ast}\fl$ of $\fl$ is free of base points to give rise to a morphism $\widetilde{\varphi} := \Phi _{\fl} \circ \psi$ (see the following diagram): 
\begin{align*}
\xymatrix{
S\ar@{.>}[r]^-{\Phi _{\fl}} & \bP ^1_k\\
\widetilde{S} \ar[u]^-{\psi} \ar[ru]_-{\widetilde{\varphi}}
}
\end{align*}
Notice that $\psi$ is defined over $k$ by construction. 
Letting ${\{ \widetilde{E}_i \}}_{1\le i \le n}$ be the exceptional divisors of $\psi$ with $\widetilde{E}_n$ the last exceptional one, which is a section of $\widetilde{\varphi}$, we have: 
\begin{align}\label{en}
(\widetilde{\fl} \cdot \widetilde{E}_i) = \left\{ \begin{array}{ll} 0 & (1 \le i \le n-1) \\ 1 & (i=n) \end{array} \right. 
\end{align}
and
\begin{align}\label{lc}
K_{\widetilde{S}} - \frac{b}{a} \psi ^{\ast}F + \frac{1}{a} \widetilde{\fl} = \psi ^{\ast} \left( K_S - \frac{b}{a}F + \frac{1}{a} \fl \right) + \sum _{i=1}^n\gamma _i \widetilde{E}_i
\end{align}
for some rational numbers $\gamma _1,\dots ,\gamma _n$. 
As $a>0$, $b \ge 0$ and $(\widetilde{\fl} ^2) = 0$, we have:
\begin{align*}
-2 &= (\widetilde{\fl} \cdot K_{\widetilde{S}}) \\
&= \left( \widetilde{\fl} \cdot K_{\widetilde{S}} + \frac{1}{a}\widetilde{\fl} \right) \\
&\ge \left( \widetilde{\fl} \cdot K_{\widetilde{S}} - \frac{b}{a} \psi ^{\ast}F   + \frac{1}{a}\widetilde{\fl} \right) \\
&\underset{(\ref{lc})}{=} \left( \widetilde{\fl} \cdot \psi ^{\ast} \left( K_S - \frac{b}{a}F + \frac{1}{a} \fl \right) \right) + \sum _{i=1}^n\gamma _i (\widetilde{\fl} \cdot \widetilde{E}_i ) \\
&\underset{(\ref{en})}{=} \left( \widetilde{\fl} \cdot \psi ^{\ast} \left( K_S - \frac{b}{a}F + \frac{1}{a} \fl \right) \right) + \gamma _n .
\end{align*}
Since $K_S - \frac{b}{a}F + \frac{1}{a} \fl \sim _{\bQ} 0$, we have $\gamma _n \le -2$. 
This means $(S,-\frac{b}{a}F+\frac{1}{a}\fl )$ is not log canonical at $x$. 
We will consider whether $F$ is smooth or not in what follows. 

{\it In the case that $F$ is smooth:} 
By the variant of Corti's inequality, we have: 
\begin{align} \label{Corti}
i(L_1,L_2;x) > 4\left( 1+ \frac{b}{a} \right) a^2 = 4a(a+b), 
\end{align}
where $L_1$ and $L_2$ are general members of $\fl$. 
Meanwhile since $L_1$ and $L_2$ meet at only $p$, the left hand side of (\ref{Corti}) can be written as: 
\begin{align*}
i(L_1,L_2;x) &= \left( \fl ^2 \right) = da(a+4b) \le 4a(a+b),
\end{align*}
where we recall that $d$ is less than or equal to $4$. It is a contradiction to (\ref{Corti}). 

{\it In the case that $F$ is not smooth:} 
Since $\pi :S \to \bP ^1_k$ is a Mori conic bundle, note that the base extension $F_{\kc}$ of $F$ is the union $E_1 + E_2$ of $(-1)$-curves $E_1$ and $E_2$ on $S_{\kc}$ meeting transversally at $x$ in such a way that $E_1$ and $E_2$ lie in the same $\Gal$-orbit. 
By the variant of Corti's inequality, we have: 
\begin{align} \label{Corti(2)}
i(L_1,L_2;x) > 4\left( 1+ \frac{b}{a} + 1+ \frac{b}{a} -1 \right)^2 a^2 = 4a(a+2b), 
\end{align}
where $L_1$ and $L_2$ are general members of $\fl$. 
By the similar argument as above, we see: 
\begin{align*}
i(L_1,L_2;x) \le 4a(a+b) \le 4a(a+2b),
\end{align*}
which is a contradiction to (\ref{Corti(2)}). 
\end{proof}
Suppose on the contrary that $S$ contains an $\bA^1_k$-cylinder, say $U \simeq \bA ^1_k \times Z$, where $Z$ is a smooth affine curve defined over $k$. 
The closures in $S$ of fibers of the projection $pr_{Z}: U \simeq Z \times \bA_k^1 \to Z$ yields a linear system, say $\fl$, on $S$. 
\begin{claim}\label{basepoint}
The base locus $\Bs (\fl )$ consists of only one $k$-rational point.
\end{claim}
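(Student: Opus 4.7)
\textbf{Proof Plan for Claim \ref{basepoint}.} The approach is to adapt the argument from the proof of Lemma \ref{lem4-1(2)} to the present $d<8$ setting, replacing the appeal to Lemma \ref{lem4-1(1)} at the end by a direct use of Lemma \ref{bdl}(3). Concretely, I would proceed in two steps: first argue that $\sL$ cannot be base point free, and then exploit the one-place-at-infinity property of $\bA_k^1$ to pin down the base locus.

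For the first step, suppose toward contradiction that $\sL$ is base point free. Then the rational map $\Phi_{\sL}: S \dashrightarrow \overline{Z}$ is an everywhere-defined morphism onto a smooth projective curve $\overline{Z}$ (defined over $k$ by Stein factorization) containing $Z$ as an open subset. A general member of $\sL$ is the closure in $S$ of a fiber $\{z\} \times \bA_k^1$ of $pr_Z$, hence a geometrically irreducible rational curve; combined with $\rho(S)=2$ and the minimality of $S$, this forces $\Phi_{\sL}$ to be an extremal contraction onto a curve, i.e., a Mori conic bundle on $S$ (cf. Proposition \ref{conic bdl}). Moreover, the ``at infinity'' locus of the cylinder, obtained as the closure in $S$ of the unique point at infinity of each $\bA_k^1$-fiber, is a curve $\Sigma \subseteq S \setminus U$ defined over $k$ which is mapped birationally onto $\overline{Z}$ by $\Phi_{\sL}$. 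Thus $\Sigma$ provides a section of $\Phi_{\sL}$ defined over $k$, in direct contradiction with Lemma \ref{bdl}(3) since $d<8$.

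For the second step, I would pass to $\kc$. A general fiber of $pr_{Z_{\kc}}: U_{\kc} \simeq \bA_{\kc}^1 \times Z_{\kc} \to Z_{\kc}$ is isomorphic to the affine line $\bA_{\kc}^1$, which has exactly one place at infinity inside $S_{\kc}$; consequently $\Bs(\sL_{\kc})$ is supported on a single closed point. Since $\sL$ itself is defined over $k$, the zero-dimensional scheme $\Bs(\sL_{\kc})$ is stable under $\Gal$, and a single $\Gal$-invariant closed point must descend to a $k$-rational point of $S$, yielding the claim.

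The main technical obstacle lies in the first step, where one must check carefully that the ``at infinity'' locus really produces a section of $\Phi_{\sL}$ defined over $k$: it must be verified that this locus is a curve (rather than empty or zero-dimensional), that it is disjoint from $U$ and defined over $k$ as the closure of a $\Gal$-stable subset, and that its restriction via $\Phi_{\sL}$ is birational onto $\overline{Z}$ because each fiber meets it in a single point. Once this is set up, invoking Lemma \ref{bdl}(3) closes the first step, and the second step is essentially formal, following the end of the proof of Lemma \ref{lem4-1(2)} verbatim.
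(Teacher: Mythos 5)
Your proposal follows the paper's own proof essentially verbatim: assume $\sL$ is base point free, note that the resulting fibration is a Mori conic bundle admitting a section over $k$ contained in $S\setminus U$, and contradict Lemma \ref{bdl}(3); then use the one-place-at-infinity property of the $\bA^1_{\kc}$-fibers (as in the proof of Lemma \ref{lem4-1(2)}) to see that $\Bs(\sL_{\kc})$ is a single $\Gal$-stable point, hence a $k$-rational base point of $\sL$. The only cosmetic difference is that you justify the conic-bundle step via $\rho(S)=2$ and Proposition \ref{conic bdl} rather than the paper's citation of Lemma \ref{MCBs}/Remark \ref{MCBs(2)}, which is an equally valid justification.
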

\begin{proof} 
Let $\Phi _{\fl}: S \dashrightarrow \overline{Z}$ be the rational map associated with $\fl$, where $\overline{Z}$ is the smooth projective model of $Z$. 
Assume that $\Bs (\fl )$ is base point-free. 
Then $\Phi _{\fl}$ is a morphism, in particular, it is a Mori conic bundle, which admits a section defined over $k$ and is contained in $S \backslash U$, by Lemma \ref{MCB}. 
However, this is a contradiction to Lemma \ref{bdl}(3). 
Thus, $\Bs (\fl )$ is not base point-free. 
By the similar argument as Lemma \ref{lem4-1(2)}, we see that $\Bs (\fl )$ consists of only one $k$-rational point. 
\end{proof} 
Let us denote by $p$ the base point of the linear system $\fl$. 
Recall that $S$ is endowed with a structure of a Mori conic bundle $\pi: S \to B$ over a geometrically rational curve $B$ defined over $k$. 
Since $p$ is $k$-rational by Claim \ref{basepoint}, so is its image via $\pi$, in particular, $B$ is isomorphic to $\bP_k^1$. 
Since $Z$ is contained in a projective line $\bP ^1_k$ on $k$ by the similar argument, $\fl$ is a linear pencil on $S$. 
Moreover, we can easily to see $\Pic (S)_{\bQ} = \bQ [-K_S] \oplus \bQ [F]$, where $F$ is a general fiber of $\pi$, which passes through $p$. 
In particular, $\fl$ is $\bQ$-linearly equivalent to $a(-K_S)+bF$ for some rational numbers $a$ and $b$. 
\begin{proof}[Proof of Proposition \ref{prop}(3)]
In this proof, we will consider whether $-K_S$ is ample or not as follows. 

At first, we shall consider the case that $-K_S$ is not ample. 
By the assumption, there exists a $\Gal$-orbit of a $(-2)$-curve on $S$, say $M$. 
Then we have $(M \cdot -K_S)=0$, moreover, we can easily to see $(M \cdot F)>0$. 
Thus, we have $b \ge 0$ by virtue of $0 \le (M\cdot \fl )=b(M \cdot F)$. 
However, it is a contradiction to Lemma \ref{Corti lem}. 

Next, we shall consider the case that $-K_S$ is ample. 
By Lemma \ref{Corti lem}, we obtain $a>0$ and $b<0$. 
By Lemma \ref{MCBs}, there exists a Mori conic bundle $\pi _2:S \to \bP ^1_k$ such that a fiber $F_2$ of $\pi _2$ passing through $p$ is linearly equivalent to $\frac{4}{d}(-K_S)-F$. 
Thus, we can write $\fl \sim _{\bQ} (a+\frac{4}{d}b)(-K_S)-bF_2$ with $a+\frac{4}{d}b>0$ and $-b>0$. 
However, it is a contradiction to Lemma \ref{Corti lem}. 

Therefore, $S$ never contains an $\bA _k^1$-cylinder for both cases. 
\end{proof}
\appendix
\section{Minimal del Pezzo surfaces of Picard rank two}\label{5}
Let $k$ be a field of characteristic zero, and let $S$ be a smooth minimal del Pezzo surface of degree $d$ and of Picard rank $\rho (S)=2$ defined over $k$. 
In this appendix, we give the proof, which says that $d$ is equal to $1,2,4$ or $8$. 
In other words, we shall show $d \not=3,5,6,7,9$. 

We can clearly see $d \not= 7,9$. 
Moreover, {\cite[Theorem 28.1]{M86}} gives the proof for the fact $d\not= 3$. 
Hence, let us prove $d \not= 5,6$: 
\begin{proof}[Proof of $d\not= 5,6$]
By Proposition \ref{conic bdl},  $S$ is endowed with a structure of Mori conic bundle defined over $k$, say $\pi : S \to B$. 
Any $(-1)$-curve on $S_{\kc}$, which is not an irreducible component of any singular fiber of $\pi _{\kc}$, meets all singular fibers of $\pi _{\kc}$. 
Notice that $S_{\kc}$ contains exactly $(8-d)$-times of singular fibers of $\pi _{\kc}$, which are the union $E_1+E_2$ of $(-1)$-curves $E_1$ and $E_2$ on $S_{\kc}$ meeting transversely at a point in such a way that $E_1$ and $E_2$ lie in the same $\Gal$-orbit. 
On the other hand, it can be easily seen that any $(-1)$-curve on $S_{\kc}$ meets transversely exactly $(8-d)$-times of $(-1)$-curves on $S_{\kc}$ since $d \ge 5$ and there exists a birational morphism to $\bP ^2_{\kc}$, which is a composite of $(9-d)$-times blow-up. 
Thus, the union of all $(-1)$-curves on $S_{\kc}$, none of which is an irreducible component of any singular fiber of $\pi _{\kc}$, is defined over $k$ and is disjoint. 
It is a contradiction to the minimality of $S$. 
\end{proof}
\section{List of all types of weak del Pezzo surfaces}\label{6}
This appendix will summarize a classification of types of weak del Pezzo surfaces with anti-canonical divisor not ample in Table \ref{type} according to the triplet $(\text{degree}\ d,\ \text{Singularities},\ \# \,\text{Lines})$ (see also \S \S \ref{2-3}, for the notation). 
\cite{D14, D12, D34} will yield all information for cases of degree $\ge 3$, whereas for cases of degree $\le 2$ few data about the number of lines on such weak del Pezzo surfaces are included there. In order to count the number of lines on them, we need to do somehow a tedious but straightforward calculation by making use of the argument in \ref{3-2-1}. 
As for how to determine $\#$\,Lines for concrete examples, see Example \ref{inB} (for other types, we can check $\#$\,Lines by the similar way). 
\begin{eg}\label{inB}
Let $S$ be a weak del Pezzo surface of degree $1$ and $A_3+4A_1$-type and let us determine $\#$\,Lines of $S$. 
We may assume that the base field $k$ of $S$ is an algebraically closed field of characteristic zero. 
We shall take a composite of blowing-ups $(\ref{sigma})$ in such a way that seven $(-2)$-curves on $S_{\kc}$ correspond to $m^3_1$, $m_{1,2}^0$, $m_{2,3}^0$, $m_{4,5}^0$, $m_{6,7}^0$, $m_{4,5,8}^1$ and $m_{6,7,8}^1$ in $I_1$ (for these notation, see (\ref{(-2)})). 
Let $e \in I_1$ be one of those in the list of (\ref{(-1)}) satisfying $(e \cdot m_1^3)\ge 0$, $(e \cdot m_{1,2}^0)\ge 0$, $(e \cdot m_{2,3}^0)\ge 0$, $(e \cdot m_{4,5}^0)\ge 0$, $(e \cdot m_{6,7}^0)\ge 0$, $(e \cdot m_{4,5,8}^1)\ge 0$ and $(e \cdot m_{6,7,8}^1)\ge 0$. 
By Lemma \ref{(-1)-curve} and the argument in \ref{3-2-1}, we note that all elements in $I_1$ such the $e$ above have one-to-one correspondence to all $(-1)$-curves on $S_{\kc}$. 
Hence, we shall determine $(e \cdot m) \ge 0$ for any $m \in I_1$ corresponding to $(-2)$-curve on $S_{\kc}$ by explicit computation. 

For example, supposing $e=-k_1+e_0-(e_{i_1}+e_{i_2}+e_{i_3})$ for $1 \le i_1 < i_2 < i_3 \le 8$, then we see $i_1\not=1$.  Otherwise, we have: 
\begin{align*}
0 \le (e \cdot m_1^3)=3(e_0^2)+2(e_1^2)+(e_{i_2}^2)+(e_{i_3}^2) = 3-2-1-1= -1, 
\end{align*}
which is absurd. 
In particular, we have $(e \cdot e_1)=1$. 
Since $(e \cdot m_{1,2}^0) \ge 0$ and $(e \cdot m_{2,3}^0) \ge 0$, we obtain $(e \cdot e_2)=(e \cdot e_3)=1$. 
Hence, we see $i_1 \not= 2,3$, namely, $4 \le i_1<i_2<i_3 \le 8$. 
However, we have: 
\begin{align*}
0 \le (e \cdot m_{4,5,8}^1+m_{6,7,8}^1) = 2 + (e_{i_1}+e_{i_2}+e_{i_3} \cdot e_4+e_5+e_6+e_7+2e_8) \le -1. 
\end{align*}
This is a contradiction. 
Thus, we obtain $e \not= -k_1+e_0-(e_{i_1}+e_{i_2}+e_{i_3})$ for any $1 \le i_1 < i_2 < i_3 \le 8$. 

By a similar argument, we can check that $e \not= -k_1+2e_0-(e_{i_1}+\dots +e_{i_6})$ for any $1 \le i_1 < \dots < i_6 \le 8$ nor $e \not= -2k_1-e_i$ for any $1 \le i \le 8$. 
Furthermore, we can also check the following: 
\begin{itemize}
\item If $e=e_i$ $(1 \le i \le 8)$, then we obtain $i=3,\,4,\,6,\,8$. 
\item If $e=\ell _{i,j}$ $(1 \le i < j \le 8)$, then we obtain $(i,j)=(1,2)$, $(1,4)$, $(1,6)$, $(1,8)$, $(4,6)$. 
\item If $e=-k_1+(e_{i_1}+e_{i_2}+e_{i_3})$ $(1 \le i_1 < i_2 < i_3 \le 8)$, then we obtain $(i_1,i_2,i_3)=(2,3,8)$, $(3,5,7)$, $(3,5,8)$, $(3,7,8)$, $(4,5,7)$, $(4,5,8)$, $(5,6,7)$, $(5,7,8)$, $(6,7,8)$. 
\item If $e=-k_1-e_1+e_j$ $(1 \le i,\, j \le 8,\ i \not= j)$, then we obtain $(i,j)=(4,5)$, $(4,8)$, $(6,7)$, $(6,8)$. 
\end{itemize}
Therefore, we have $\#$\,Lines $=4+5+9+4=22$ (cf. Table \ref{type}). 
\end{eg}
\begin{center}
 \begin{longtable}{|c|c||c|c||c|c|}
\caption{The list of the types of weak del Pezzo surfaces. }\label{type} \\
 \cline{1-4}
\multicolumn{2}{|c||}{Degree $8$} & \multicolumn{2}{c||}{Degree $7$} & \multicolumn{2}{c}{} \\ \cline{1-4}
\endfirsthead
\caption{Continued. } \\
 \hline
\endhead

Singularities & $\#$\,Lines & Singularities & $\#$\,Lines & \multicolumn{2}{c}{} \\ \cline{1-4}
$A_1$ & $0$ & $A_1$ & $2$ & \multicolumn{2}{c}{} \\ \hline \hline

\multicolumn{1}{|c}{\qquad \qquad \qquad \qquad} & \multicolumn{1}{c}{} & \multicolumn{2}{c}{Degree $6$} & \multicolumn{1}{c}{\qquad \qquad \qquad \qquad} & \multicolumn{1}{c|}{} \\ \hline 
Singularities & $\#$\,Lines & Singularities & $\#$\,Lines & Singularities & $\#$\,Lines \\ \hline
$A_2+A_1$ & $1$ & $A_2$ & $2$ & $2A_1$ & $2$ \\ \hline
$(A_1)_<$ & $3$ & $(A_1)_>$ & $4$ \\ \hline \hline

\multicolumn{6}{|c|}{Degree $5$} \\ \hline
Singularities & $\#$\,Lines & Singularities & $\#$\,Lines & Singularities & $\#$\,Lines \\ \hline
$A_4$ & $1$ & $A_3$ & $2$ & $A_2+A_1$ & $3$ \\ \hline
$A_2$ & $4$ & $2A_1$ & $5$ & $A_1$ & $7$ \\ \hline \hline

\multicolumn{6}{|c|}{Degree $4$} \\ \hline
Singularities & $\#$\,Lines & Singularities & $\#$\,Lines & Singularities & $\#$\,Lines \\ \hline
$D_5$ & $1$ & $A_3+2A_1$ & $2$ & $D_4$ & $2$ \\ \hline
$A_4$ & $3$ & $A_3+A_1$ & $3$ & $A_2+2A_1$ & $4$ \\ \hline
$4A_1$ & $4$ & $(A_3)_<$ & $4$ & $(A_3)_>$ & $5$ \\ \hline
$A_2+A_1$ & $6$ & $3A_1$ & $6$ & $A_2$ & $8$ \\ \hline
$(2A_1)_<$ & $8$ & $(2A_1)_>$ & $9$ & $A_1$ & $12$ \\ \hline \hline

\multicolumn{6}{|c|}{Degree $3$} \\ \hline
Singularities & $\#$\,Lines & Singularities & $\#$\,Lines & Singularities & $\#$\,Lines \\ \hline
$E_6$ & $1$ & $A_5+A_1$ & $2$ & $3A_2$ & $3$ \\ \hline
$D_5$ & $3$ & $A_5$ & $3$ & $A_4+A_1$ & $4$ \\ \hline
$A_3+2A_1$ & $5$ & $2A_2+A_1$ & $5$ & $D_4$ & $6$ \\ \hline
$A_4$ & $6$ & $A_3+A_1$ & $7$ & $2A_2$ & $7$  \\ \hline
$A_2+2A_1$ & $8$ & $4A_1$ & $9$ & $A_3$ & $10$ \\ \hline
$A_2+A_1$ & $11$ & $3A_1$ & $12$ & $A_2$ & $15$ \\ \hline
$2A_1$ & $16$ & $A_1$ & $21$ \\ \hline \hline

\multicolumn{6}{|c|}{Degree $2$} \\ \hline
Singularities & $\#$\,Lines & Singularities & $\#$\,Lines & Singularities & $\#$\,Lines \\ \hline
$E_7$ & $1$ & $A_7$ & $2$ & $D_6+A_1$ & $2$ \\ \hline
$A_5+A_2$ & $3$ & $D_4+3A_1$ & $4$ & $2A_3+A_1$ & $4$ \\ \hline
$E_6$ & $4$ & $D_6$ & $3$ & $A_6$ & $4$ \\ \hline
$D_5+A_1$ & $5$ & $(A_5+A_1)_<$ & $5$ & $(A_5+A_1)_>$ & $6$ \\ \hline
$D_4+2A_1$ & $6$ & $A_4+A_2$ & $6$ & $2A_3$ & $6$ \\ \hline
$A_3+A_2+A_1$ & $7$ & $A_3+3A_1$ & $8$ & $3A_2$ & $8$ \\ \hline
$6A_1$ & $10$ & $D_5$ & $8$ & $(A_5)_<$ & $7$ \\ \hline
$(A_5)_>$ & $8$ & $D_4+A_1$ & $9$ & $A_4+A_1$ & $10$ \\ \hline
$A_3+A_2$ & $10$ & $(A_3+2A_1)_<$ & $11$ & $(A_3+2A_1)_>$ & $12$ \\ \hline
$2A_2+A_1$ & $12$ & $A_2+3A_1$ & $13$ & $5A_1$ & $14$ \\ \hline
$D_4$ & $14$ & $A_4$ & $14$ & $(A_3+A_1)_<$ & $15$  \\ \hline
$(A_3+A_1)_>$ & $16$ & $2A_2$ & $16$ & $A_2+2A_1$ & $18$ \\ \hline
$(4A_1)_<$ & $19$ & $(4A_1)_>$ & $20$ & $A_3$ & $22$ \\ \hline
$A_2+A_1$ & $24$ & $(3A_1)_<$ & $25$ & $(3A_1)_>$ & $26$ \\ \hline
$A_2$ & $32$ & $2A_1$ & $34$ & $A_1$ & $44$ \\ \hline \hline

\multicolumn{6}{|c|}{Degree $1$} \\ \hline
Singularities & $\#$\,Lines & Singularities & $\#$\,Lines & Singularities & $\#$\,Lines \\ \hline
$E_8$ & $1$ & $D_8$ & $2$ & $A_8$ & $3$ \\ \hline
$E_7+A_1$ & $3$ & $A_7+A_1$ & $5$ & $E_6+A_2$ & $4$ \\ \hline
$D_6+2A_1$ & $5$ & $D_5+A_3$ & $5$ & $A_5+A_2+A_1$ & $8$ \\ \hline
$2D_4$ & $5$ & $2A_4$ & $6$ & $2A_3+2A_1$ & $11$ \\ \hline
$4A_2$ & $12$ & $E_7$ & $5$ & $D_7$ & $5$ \\ \hline
$(A_7)_<$ & $7$ & $(A_7)_>$ & $8$ & $E_6+A_1$ & $8$ \\ \hline
$D_6+A_1$ & $9$ & $A_6+A_1$ & $10$ & $D_5+A_2$ & $10$ \\ \hline
$D_5+2A_1$ & $12$ & $A_5+A_2$ & $12$ & $A_5+2A_1$ & $14$ \\ \hline
$D_4+A_3$ & $11$ & $D_4+3A_1$ & $17$ & $A_4+A_3$ & $12$ \\ \hline
$A_4+A_2+A_1$ & $15$ & $2A_3+A_1$ & $16$ & $A_3+A_2+2A_1$ & $19$ \\ \hline
$A_3+4A_1$ & $22$ & $3A_2+A_1$ & $20$ & $E_6$ & $13$ \\ \hline
$D_6$ & $13$ & $A_6$ & $15$ & $D_5+A_1$ & $18$ \\ \hline
$(A_5+A_1)_<$ & $20$ & $(A_5+A_1)_>$ & $21$ & $D_4+A_2$ & $20$ \\ \hline
$D_4+2A_1$ & $24$ & $A_4+A_2$ & $22$ & $A_4+2A_1$ & $25$ \\ \hline
$(2A_3)_<$ & $22$ & $(2A_3)_>$ & $23$ & $A_3+A_2+A_1$ & $27$ \\ \hline
$A_3+3A_1$ & $31$ & $3A_2$ & $29$ & $2A_2+2A_1$ & $32$ \\ \hline
$A_2+4A_1$ & $36$ & $6A_1$ & $41$ & $D_5$ & $27$ \\ \hline
$A_5$ & $29$ & $D_4+A_1$ & $34$ & $A_4+A_1$ & $36$ \\ \hline
$A_3+A_2$ & $38$ & $(A_3+2A_1)_<$ & $43$ & $(A_3+2A_1)_>$ & $44$ \\ \hline
$2A_2+A_1$ & $45$ & $A_2+3A_1$ & $50$ & $5A_1$ & $56$ \\ \hline
$D_4$ & $49$ & $A_4$ & $51$ & $A_3+A_1$ & $60$  \\ \hline
$2A_2$ & $62$ & $A_2+2A_1$ & $69$ & $(4A_1)_<$ & $76$ \\ \hline
$(4A_1)_>$ & $77$ & $A_3$ & $83$ & $A_2+A_1$ & $94$ \\ \hline
$3A_1$ & $103$ & $A_2$ & $127$ & $2A_1$ & $138$ \\ \hline
$A_1$ & $183$ & \multicolumn{1}{c}{\qquad \qquad \qquad \qquad} & \multicolumn{3}{c}{} \\ \cline{1-2}
\end{longtable}
 \end{center}

\subsection*{Acknowledgement}
The author is deeply grateful to his supervisor Professor Takashi Kishimoto for his helpful advice with warm encouragement during the preparation of the article. 
He is further thankful to Professors Ivan Cheltsov and Kiwamu Watanabe for giving some useful advice. 
Also, he would like to thank the referees for suggesting many valuable comments that helped to improve this article. 

This work was supported by the JGC-S (Nikki-Saneyoshi) Scholarship Foundation. 


\begin{thebibliography}{99}
\bibitem{AN06} V. Alexeev and V. V. Nikulin, {\em Del Pezzo and K3 surfaces}, MSJ Memoirs, Vol. 15, Mathematical Society of
Japan, Tokyo, 2006. 
\bibitem{CPW16} I. Cheltsov, J. Park and J. Won, {\em  Cylinders in singular del Pezzo surfaces}, Compositio Mathematica \textbf{152} (2016), 1198--1224. 
\bibitem{CP21} I. Cheltsov and Y. Prokhorov, {\em Del Pezzo surfaces with infinite automorphicm groups}, Algebraic Geometry {\bf 8} (2021), 319--357.  
\bibitem{CT88} D. F. Coray and M. A. Tsfasman, {\em Arithmetic on singular Del Pezzo surfaces}, Proceedings of the London Mathematical Society. Third Series \textbf{57} (1988), 25--87. 
\bibitem{C00} A. Corti, {\em Singularities of linear systems and 3-fold birational geometry}, In:\ {\em Explicit Birational Geometry of 3-folds}, London Mathematical Society Lecture Notes Series, Vol. 281, Cambridge University Press, Cambridge, 2000, 259--312. 
\bibitem{D14} U. Derenthal, {\em Singular del Pezzo surfaces whose universal torsors are hypersurfaces}, Proceedings of the London Mathematical Society. Third Series {\bf 108} (2014), 638--681. 
\bibitem{D12} I. V. Dolgachev, {\em Classical Algebraic Geometry: a modern view}, Cambridge University Press, Cambridge, 2012. 
\bibitem{DK18} A. Dubouloz and T. Kishimoto, {\em Cylinders in del Pezzo fibrations}, Israel Journal of Mathematics \textbf{225} (2018), 797--815. 
\bibitem{D34} P. Du Val, {\em On isolated singularities of surfaces which do not affect the conditions of adjunction (part III)}, Mathematical Proceedings of the Cambridge Philosophical Society {\bf 30} (1934), 483--491. 
\bibitem{H09} B. Hassett, {\em Rational surfaces over nonclosed fields}, In: {\em Arithmetic Geometry}, Clay Mathematics Proceedings, Vol. 8, American Mathematical Society, Providence, RI, 2009, 155--209. 
\bibitem{KM78}T. Kambayashi and M. Miyanishi, {\em On flat fibrations by the affine line}, Illinois Journal of Mathematics \textbf{22} (1978), 662--671. 
\bibitem{KPZ1} T. Kishimoto, Y. Prokhorov and M. Zaidenberg, \emph{Group actions on affine cones}, In: {\em Affine Algebraic Geometry}, CRM Proceedings and Lecture Notes, Vol. 54, American Mathematical Society, Providence, RI, 2011, 123--163. 
\bibitem{KPZ2} T. Kishimoto, Y. Prokhorov and M. Zaidenberg, \emph{${\mathbb G}_a$-actions on affine cones}, Transformation Groups \textbf{18} (2013), 1137--1153.  
\bibitem{KPZ3} T. Kishimoto, Y. Prokhorov and M. Zaidenberg, \emph{Affine cones over Fano threefolds and additive group actions}, Osaka Journal of Mathematics \textbf{51} (2014), 1093--1112. 
\bibitem{KPZ4} T. Kishimoto, Y. Prokhorov and M. Zaidenberg, \emph{Unipotent group actions on del Pezzo cones}, Algebraic Geometry \textbf{1} (2014), 46--56. 
\bibitem{K} J. Koll\'{a}r, {\em Real algebraic surfaces}, preprint, \href{https://arxiv.org/abs/alg-geom/9712003}{arXiv:alg-geom/9712003}, 1997. 
\bibitem{KM98} J. Koll\'{a}r and S. Mori, {\em Birational geometry of algebaic varieties}, Cambridge Tracts in Mathematics, Vol. 134, Cambridge University Press, Cambridge, 1998. 
\bibitem{KSC04} J. Koll\'ar, K. E. Smith and A. Corti, {\em Rational and Nearly Rational Varieties}, Cambridge Studies in Advanced Mathematics, Vol. 92, Cambridge University Press, Cambridge, 2004. 
\bibitem{M86} Yu. I. Manin, {\em Cubic Forms: Algebra, Geometry, Arithmetic}, North-Holland Mathematical Library, Vol. 4, North-Holland Publishing Co., Amsterdam, 1986. 
\bibitem{MS} G. Martin and C. Stadlmayr, {\em Weak del Pezzo surfaces with global vector fields}, preprint, \href{https://arxiv.org/abs/2007.03665}{arXiv:2007.03665}, 2020.
\bibitem{M} M. Miyanishi, {\em Algebraic Geometry}, Translations of Mathematical Monographs, Vol. 136, American Mathematical Society, Providence, RI, 1994. 
\bibitem{P17} B. Poonen, {\em Rational Points on Varieties}, Graduate Studies in Mathematics, Vol. 186, American Mathematical Society, Providence, RI, 2017. 
\bibitem{R02} F. Russo, {\em The antibirational involutions of the plane and the classification of real del Pezzo surfaces}, In: {\em Algebraic geometry}, de Gruyter, Berlin, 2002, 289--312. 
\bibitem{T} R. Tamanoi, {\em Unirationality of RDP del Pezzo surfaces of degree 2}, preprint, \href{https://arxiv.org/abs/2101.03350}{arXiv:2101.03350}, 2021. 
\bibitem{Z88} D. -Q. Zhang, {\em Logarithmic del Pezzo surfaces of rank one with contractible boundaries}, Osaka Journal of Mathematics {\bf 25} (1988), 461--497. 
\end{thebibliography}
\end{document}